\tikzset{node distance=2cm, auto}
\newcommand{\conj}[1]{\quad\textnormal{ #1 }\quad}
\newcommand{\inp}[1]{\ensuremath{\langle #1 \rangle}}
\newcommand{\module}[0]{\operatorname{-mod}}
\newcommand{\rmodule}[0]{\operatorname{mod-}\!}
\newcommand{\normaltext}[1]{\textnormal{#1}}
\def\imod#1{\allowbreak\mkern2.5mu({\operator@font mod}\,#1)}
\renewcommand{\a}{\alpha}
\renewcommand{\b}{\beta}
\newcommand{\e}{\epsilon}
\newcommand{\vp}{\varphi}
\newcommand{\opp}{\oplus}
\newcommand{\ott}{\otimes}
\renewcommand{\l}{\lambda}
\newcommand{\s}{\sigma}
\newcommand{\Ga}{\Gamma}
\newcommand{\ga}{\gamma}
\renewcommand{\d}{\delta}
\newcommand{\aC}{\mathcal{C}}
\newcommand{\aF}{\mathcal{F}}
\newcommand{\aT}{\mathcal{T}}
\newcommand{\bA}{\mathbf{A}}
\newcommand{\bn}{\mathbf{n}}
\newcommand{\fH}{\mathfrak{H}}
\newcommand{\FF}{\mathbb{F}}
\newcommand{\PP}{\mathbb{P}}
\newcommand{\QQ}{\mathbb{Q}}
\newcommand{\RR}{\mathbb{R}}
\newcommand{\ZZ}{\mathbb{Z}}
\newcommand{\kk}{\mathbb{k}}
\newcommand{\eC}{\EuScript{C}}
\newcommand{\eD}{\EuScript{D}}
\theoremstyle{plain}
\newtheorem{thm}{Theorem}[section]
\newtheorem{theorem}[thm]{Theorem}
\newtheorem{conjecture}[thm]{Conjecture}
\newtheorem{prop}[thm]{Proposition}
\newtheorem{cor}[thm]{Corollary}
\newtheorem{lemma}[thm]{Lemma}
\theoremstyle{remark}
\theoremstyle{definition}
\newtheorem{example}[thm]{Example}
\newtheorem{defn}[thm]{Definition}
\newtheorem{definition}[thm]{Definition}
\newtheorem{notation}[thm]{Notation}
\newtheorem{rmk}[thm]{Remark}
\numberwithin{equation}{section}
\DeclareMathAlphabet{\mathscrbf}{OMS}{mdugm}{b}{n}
\def\imod#1{\allowbreak\mkern2.5mu({\operator@font mod}\,#1)}
\def\namedlabel#1#2{\begingroup
   \def\@currentlabel{#2}%
   \label{#1}\endgroup
}
\newcommand{\nt}[1]{\normaltext{\enskip #1\enskip}}
\newcommand{\aprod}{\varoast}
\newcommand{\Ainf}{A_\infty}
\newcommand{\h}{h}
\newcommand{\A}{\Lambda} %minimal arc system
\renewcommand{\bA}{\bar{\Lambda}} %minimal arc system
\renewcommand{\kk}{k}
\newcommand{\lab}[1]{\normaltext{#1}}
\newcommand{\vnp}[1]{\lvert #1 \rvert}
\newcommand{\xto}[1]{\xrightarrow{#1}}
\newcommand{\xfrom}[1]{\xleftarrow{#1}}
\newcommand{\from}{\leftarrow}
\newcommand{\op}{\normaltext{op}}
\theoremstyle{plain}
\newcommand{\NAlg}{\bn}
\newcommand{\Vect}{Vect}
\newcommand{\im}{im\,}
\newcommand{\End}{End}
\newcommand{\Aut}{Aut}
\newcommand{\Obj}{Ob}
\newcommand{\Ob}{\Obj}
\renewcommand{\s}{\sigma}
\newcommand{\Hom}{Hom}
\newcommand{\Ext}{Ext}
\newcommand{\Alg}{{\bf f}}
\newcommand{\DHa}{\mathcal{D}\!\mathcal{H}}
\newcommand{\DC}{DC}
\newcommand{\tCHa}{DC}
\newcommand{\tDHa}{D\!H\!a}
\newcommand{\Rep}{Rep}
\newcommand{\E}{E}
\renewcommand{\e}{\epsilon}
\newcommand{\F}{\aF}
\begin{document}

\title[Hall algebras of surfaces]{The Hall algebras of Annuli}
\author[Benjamin Cooper]{Benjamin Cooper}
\author[Peter Samuelson]{Peter Samuelson}
\address{University of Iowa, Department of Mathematics, 14 MacLean Hall, Iowa City, IA 52242-1419 USA}
\email{ben-cooper\char 64 uiowa.edu}
\address{University of California, Riverside, Department of Mathematics, 900 University Ave., Riverside 92521 USA}
\email{psamuels\char 64 ucr.edu}

\begin{abstract}
We refine and prove the central conjecture of our first paper for annuli with at least two marked intervals on each boundary component by computing 
the derived Hall algebras of their Fukaya categories. 
\end{abstract}
\vspace{.5in}

\maketitle

\setlength{\epigraphwidth}{0.35\textwidth}

\setcounter{tocdepth}{1}
\setcounter{secnumdepth}{2}
\vspace{.5in}
\tableofcontents

\section{Introduction}\label{introsec}

The Hall algebraic approach to quantum groups developed by Ringel
\cite{Ringel} and Lusztig \cite{Lusztig} endowed the theory introduced by
Drinfeld and Jimbo with a geometric and higher categorical intepretation.
The relationship between quantum groups and Witten-Reshetikhin-Turaev
topological field theories inspired the famous conjectures of Crane and
Frenkel \cite{CraneFrenkel}. When understood as a 4-dimensional topological
field theory, the objects naturally associated to surfaces are
manifestations of skein algebras.  Morton and Samuelson
\cite{MortonSamuelson} suggested that there should be a relationship between
the Hall algebra of the Fukaya category and the skein algebra of the
surface. The present paper is one of a series which develops these ideas.

In \cite{sillyman1}, the authors introduced a series of conjectures which
suggest a Hall algebraic approach to skein algebras of surfaces. First, the
graded HOMFLY-PT skein algebra of a surface was conjectured to be isomorphic
to the Hall algebra of the Fukaya category of that surface, at least once
the former was rigorously defined.
$$H_q(S) \xto{\sim} \tDHa (D^\pi\F(S))$$ 
Second, the authors gave a conjectural generators and relations presentation of the 
Hall algebra for most surfaces, which was called the Naive Conjecture, see
\S\ref{naiveconjsec}.  This conjecture implies that if the surface has
sufficiently many marked intervals on its the boundary then the 
Hall algebra can be presented in an explicit way using a decomposition of the surface into disks.

 The main result of the present paper occurs in \S\ref{nannulisec} where we use quiver
techniques to compute the Hall algebras of the Fukaya categories of annuli
$K_{m,n}$ with $m$ marked intervals on one boundary component and $n$ marked
intervals on the other when $m\geq 2$ and $n\geq 2$. 
Roughly stated, this proof has two steps. First, we show that the
Hall algebra is isomorphic to the composition subalgebra of the
derived Hall algebra of a certain quiver. This quiver composition subalgebra has a
presentation by a result of Hernandez and Leclerc. Second, we show that the
relations in this presentation correspond to triangles in the surface, which
then hold in the naive algebra by functoriality.

For example, when the surface $S$ is the annulus $K_{2,2}$ with two
marked intervals on each boundary component, there is an arc system $A$
consisting of two boundary arcs intervals $E_1$, $F_1$ and $E_3, F_3$ between
each of the two marked intervals and two internal arcs $E_2=F_4$ and
$E_4=F_2$. This is pictured below.
\vspace{.1in}
\begin{center}\label{fig:annuluspresex}
\begin{overpic}[scale=1]%,grid,tics=10]
{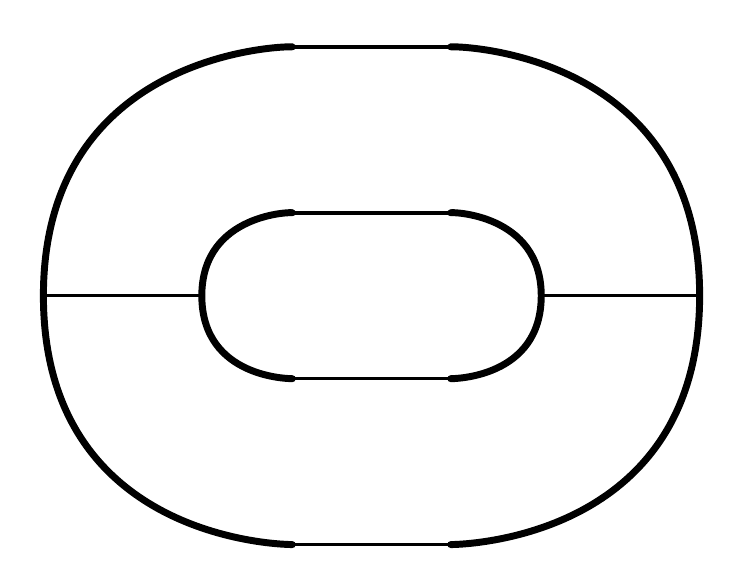}
\put(102,160){$E_1$}
\put(102,0.5){$F_1$}

\put(102,95.5){$E_3$}
\put(102,65.5){$F_3$}

\put(31,88){$E_4$}
\put(31,73){$F_2$}

\put(173,88){$E_2$}
\put(173,73){$F_4$}
\end{overpic}
\end{center}
\vspace{.1in}
Cutting the annulus along the arc system produces two disks $(D^2_1,\A_4)$ and $(D^2_2,\A'_4)$ with four arcs $\A_4 = \{E_1,E_2,E_3,E_4\}$ and $\A'_4 = \{F_1,F_2,F_3,F_4\}$. Since there are enough marked intervals, the Naive Conjecture implies that the composition subalgebra $\Alg(S,A)$ 
is generated by suspensions of the arcs $E_{i,n} = \s^nE_i$ and $F_{j,m}= \s^mF_j$ for $1\leq i,j\leq 4$ and $n,m\in\ZZ$ subject to the \eqref{g1:itm} and \eqref{g3:itm} relations:
\begin{align*}
E_{2,n}=F_{4,n}  \conj{ and } E_{4,n} = F_{2,n} & \textnormal{ for all } n \in \ZZ,\\
[E_{2,n},F_{2,m}]_1 = 0  \conj{ and }  [F_{2,n},E_{2,m}]_1 = 0 & \textnormal{ for all } n,m\in\ZZ,
\end{align*}
as well as the relations which hold within each disk algebra $\Alg(D^2_1,\A_4)$ and $\Alg(D^2_2,\A'_4)$.

Our results show that the Naive Conjecture holds for annuli with at least 2
marked intervals on each boundary component, and 
this gives further evidence to suggest the correctness of this conjecture. 
Unfortunately, quiver
methods do not extend directly to the general case which will be addressed
in \cite{sillyman2} using different techniques.

Our study of the Hall algebra of these annuli is mirror to the study of the
Hall algebra of coherent sheaves on weighted projective lines
\cite{BSWPL}. More complicated surfaces obtained by the gluing of
annuli are mirror to sheaves on gluings of weighted projective lines
\cite{LP}. Following \cite{MortonSamuelson}, we expect the Hall algebra of
the torus to parallel the Hall algebra of sheaves on the elliptic
curve \cite{BS, SV11}.

While this paper was under review, F. Haiden posted the preprints
\cite{Haiden1, Haiden2} which provide a geometric approach to some of the
questions studied here.

\subsection*{Acknowledgments} 
The author would like to thank K. Kawamuro, F. Haiden, Y. Lekili and O. Schiffmann for
their collegiality and helpful conversations.  The first author would also like to
thank the Max Planck Institute for the hospitality and excellent working
conditions during July and August 2018. Most of this paper was written during his
visit.

\section{Recollections and definitions}\label{recanddefsec}
In \cite{sillyman1}, we showed that finitary marked surfaces $(S,M)$ have
functorially defined derived Hall algebras $\tDHa(D^\pi\F(S,A))$ and
composition subalgebras $\Alg(S,A)$ associated to each arc system $A$.  The
purpose of this section is both to recall a few of these details and to
establish notation.  New materials include the relationship between marked
surfaces and ribbon graphs.  The foliation data from \cite[\S
4.3]{sillyman1} is extended to all surfaces. Balanced foliations are
introduced here, as are fully formal arc systems, in
Def. \ref{fullyformaldef}, which are used later in \S\ref{nanparamsec}.

\subsection{Surface topology}\label{surfsec}
When a surface $S$ has corners, the smooth part of its boundary
$\partial S = \partial_0 S \sqcup \partial_1 S$ is a disjoint union of
closed $1$-manifolds, $\partial_0 S$, and open intervals, $\partial_1 S$.  A
{\em marked surface} is an oriented surface $S$ with corners together with a
subset $M\subset\partial S$ of the boundary which contains each closed
component, $\partial_0S \subset M$, and every other component of
$\partial_1 S$.  A marked surface is {\em finitary} when it is compact, has
boundary and there is at least one marked interval on each boundary
component.

An {\em arc} in a marked surface $S$ is a closed embedded interval which
intersects $M$ transversely at its endpoints and is not isotopic to an
interval in $M$.  Arcs are considered up to ambient isotopies in which endpoints may
move within respective components of $M$.  A {\em boundary arc} is an arc
which is isotopic to the closure of a component of
$\partial S \backslash M$. An {\em internal arc} is an arc which is not a
boundary arc.  An {\em arc system} $A$ in $S$ is a collection of pairwise
disjoint non-isotopic arcs. A {\em full arc system} is an arc system $A$
containing all boundary arcs that cuts $S$ into a collection of disks.  

\begin{defn}\label{fullyformaldef}
An arc system $A$ is {\em fully formal} when it cuts $S$ into a collection of
disks each of which contains exactly one boundary arc that is {\em not}
contained in $A$, see \cite[\S 3]{HKK}.
\end{defn}

\begin{rmk}\label{ffrmk}
A fully formal arc system is not a full arc system. The missing boundary arc
can be recovered as a twisted complex of the other arcs in the disk which
contains it.
  \end{rmk}

\begin{example}\label{exex}
If $(S,M)$ is a disk $D^2$ with $m$ marked intervals then there
is a minimal full arc system $\A_m$ consisting only of boundary arcs.
Removing any one of the arcs $X\in \A$ produces an arc
system $\A\backslash \{X\}$ which is fully formal. 
\end{example}

A {\em map $f : (S,M) \to (T,N)$ of marked surfaces} is an orientation preserving
immersion which satisfies $f(M)\subset N$ and maps the boundary arcs of $S$
to disjoint non-isotopic arcs in $T$.  Note that such maps are not necessarily closed under composition. 
If $f$ also takes arcs in an arc system for $S$ to arcs in an arc system for $T$
then $f$ induces a strict $A_\infty$-functor
between associated Fukaya categories. For more details see \cite[\S 3]{HKK}.

\subsubsection{Marked surfaces as graphs}\label{ribbonsec}
\newcommand{\Ed}{E(\Ga)}
\newcommand{\Vr}{V(\Ga)}

\begin{defn}\label{graphdef}
Fix a countably infinite set $\Omega$. A {\em graph} $\Ga$ is a subset $\Ga \subset \Omega$ of {\em half-edges} equipped two partitions
  $$\Ga = \sqcup_{e\in \Ed} e \conj{ and } \Ga = \sqcup_{v\in \Vr} v$$
one into {\em edges}  $e\in \Ed$ and one into {\em vertices} $v\in \Vr$. Each edge $e\subset \Ga$ is required to have cardinality one or two. 
If $\vnp{e} = 2$ then $e$ is called an {\em internal edge} and if $\vnp{e}=1$ then $e$ is called a {\em boundary edge}. The size $\vnp{v}$ of a vertex  $v\subset\Ga$ is called its {\em valence} and it is required to be greater than two; $\vnp{v} \geq 3$.
A {\em ribbon graph} is a graph in which each vertex is equipped with a cyclic order.
\end{defn}

\begin{prop}\label{trivprop}
  Marked surfaces $(S,M)$ equipped with full arc systems $A$ are in one-to-one correspondence with ribbon graphs.
  \end{prop}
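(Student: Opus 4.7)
The plan is to build explicit maps in both directions and verify they are mutual inverses.

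First I would construct a forward map sending $(S,M,A)$ to a ribbon graph $\Ga(S,A)$. Since $A$ is full, the components of $S\setminus A$ are open disks, and each such disk $D$ has boundary decomposing into an alternating cyclic sequence of arcs from $A$ and marked intervals from $M$. I declare the vertex set $\Vr$ to be the set of such disks. For each disk $D$, walking around $\partial D$ once in the orientation induced from $S$ produces an ordered list of arc-sides; these are the half-edges at $D$, and the list supplies the required cyclic order. Half-edges at possibly coincident disks are grouped into edges according to which arc of $A$ they come from: internal arcs appear on two disk-sides and hence give edges of size two, while boundary arcs appear on only one side and give edges of size one. The valence of a vertex is the number of arcs on the boundary of the corresponding disk, so I will need to restrict the correspondence to those $(S,M,A)$ in which every complementary disk has at least three arcs on its boundary, matching Definition \ref{graphdef}.

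Next I would build the inverse. Given a ribbon graph $\Ga$, for each vertex $v$ of valence $n=\vnp{v}$ I take a closed $2n$-gon $D_v$ with sides labeled alternately by the half-edges incident to $v$, in the cyclic order given by the ribbon structure, and by unlabeled sides that will become marked intervals. I then glue the polygons: for each internal edge $\{h,h'\}$ I identify the side of $D_v$ labeled $h$ with the side of $D_{v'}$ labeled $h'$ by the unique orientation-reversing identification compatible with the ribbon cyclic orders, which is the standard procedure ensuring that the result is an oriented surface. Boundary edges contribute no gluing. The result $S(\Ga)$ is a compact oriented surface with corners whose marked set is the union of the images of the unlabeled polygon sides, and the images of the labeled sides form a full arc system.

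Finally I would check the two constructions are inverse. The composite $\Ga \mapsto S(\Ga) \mapsto \Ga(S(\Ga))$ recovers the original ribbon graph almost tautologically, since vertices, half-edges, edges, and cyclic orders are all read off the same combinatorial data used to build $S(\Ga)$. The composite $(S,A) \mapsto \Ga(S,A) \mapsto S(\Ga(S,A))$ relies on the standard fact that a surface cut along a collection of arcs into disks can be reassembled by gluing those disks back along the arcs. The step I expect to be the main obstacle is administrative rather than conceptual: carefully tracking the boundary combinatorics (internal versus boundary arcs, the distribution of marked intervals between adjacent polygons, and the case when both sides of an internal arc lie in the same disk) and matching the orientation data on $S$ to the ribbon structure on $\Ga$ at the corners where several half-edges meet.
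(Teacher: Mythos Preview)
Your proposal is correct and follows the same construction the paper sketches: vertices for complementary disks, half-edges for arc-sides, cyclic order from the orientation, and the inverse by gluing polygons. The paper treats this as folklore and gives only the one-sentence forward description after the statement, so your write-up is considerably more detailed; your observation about the valence $\geq 3$ restriction is a genuine subtlety the paper passes over silently.
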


  This correspondence associates a vertex to each disk in $S \backslash A$,
  an edge to each arc which is in the boundary of two disks, and a half-edge
  to each arc which is in the boundary of a unique disk. The cyclic ordering
  of half-edges in a vertex is determined by the orientation of the surface.

\newcommand{\gr}{\fH}
\subsubsection{Gradings of surfaces and graphs}\label{gradingsec}
A {\em grading} of an oriented surface $(S,M)$ is a foliation by lines; i.e. a
section $\eta\in \Ga(S,\PP(TS))$ 
of the projectivization of the tangent bundle. Two homotopic foliations give equivalent gradings.  An
oriented marked surface together with a homotopy class of foliation or
equivalent data is said to be {\em graded}.  The set of {\rm homotopy classes of
gradings on $S$} is 
\begin{equation}\label{greqn}
\gr(S) := \pi_0 \Ga(S, \PP(TS)).
\end{equation}

A {\em grading} on an immersed curve $\ga : I \to S$ in a graded surface $(S,\eta)$ is a smoothly varying choice of
path 
$\tilde{\ga}(p) : \dot{\ga}(p) \to \eta_p$ from the line defined by
the derivative of $\gamma$ to the foliation at each point $p=\ga(t)$.  If $\ga_1$ and
$\ga_2$ are two such graded curves and intersect at a point $p$ then the
intersection index is given by
$$i_p(\ga_1,\ga_2) := \tilde{\ga}_1(p) \cdot \kappa \cdot \tilde{\ga}_2(p)^{-1} \in \pi_1(\PP(T_pS),\eta_p) \cong \ZZ,$$
where $\kappa$ is the shortest counterclockwise path from $\dot{\ga}_1$ to $\dot{\ga}_2$.
As maps in the Fukaya category, points of intersection between graded curves are graded by intersection
index. The identification $\pi_1(\PP(T_pS),\eta_p) \cong\ZZ$ is determined by the orientation of $S$.
If $\ga$ is a graded curve then the {\em $n$-fold suspension} of $\ga$ is $\s^n\ga = \ga[-n]$ is the graded curve given by multiplying each path generator by $n$-times the generator of $\pi_1(\PP(T_pS))$.

Up to homotopy, the choice of $\eta$ is determined by a collection of
integers assigned to the half-edges of the graph associated to the surface by Prop. \ref{trivprop}.

\begin{defn}\label{foldatadef}
A function $f : \Ga \to \ZZ$ on the half-edges of $\Ga$
is {\em foliation data} when  $\sum_{h\in v} f(h) = \vnp{v} - 2$ for every vertex $v \in V(\Ga)$.
\end{defn}

It is useful to know how foliation data transforms under operations on
ribbon graphs.  Suppose $f$ is foliation data on a ribbon graph $\Ga$ and
the graph $\Ga/e$ is obtained by {\em collapsing an internal edge}. This operation is dual to deleting an arc that separates two disks in a disk decomposition of the surface. If
$e = \{h_i,h'_j\}$ where $h_i \in H(v) = \{h_1, h_2, \ldots, h_n\}$ and $h'_j \in
H(v') = \{h'_1, h'_2, \ldots, h'_m\}$ in $\Ga$ then this new graph is determined by the assignments  $\Ga/e := \Ga\backslash e$, $E(\Ga/e) := E(\Ga)\backslash e$, $H(v\#v') = (H(v)\cup H(v'))\backslash \{ h_i, h'_j \}$, see \cite[\S 1.1]{Igusa}. When this is so, the map $f$ determines foliation data $f_{/e}$ by the assignment
\begin{equation}\label{gluefoleq}
  f_{/e}(h) := \left\{\begin{array}{ll} f(h_{i-1})+f(h'_j) & \normaltext{ if } h = h_{i-1} \\ f(h'_{j-1}) + f(h_i) & \normaltext { if } h = h'_{j-1} \\ f(h) & \normaltext{ otherwise }  \end{array} \right.
  \end{equation}
for  $h \in H(v\#v')$ in $\Ga/e$. Otherwise $f_{/e} := f$ on $(\Ga/e)\backslash H(v\# v')$.

\subsection{Fukaya categories}\label{fukcatsec}
\newcommand{\EF}{\F}
\newcommand{\DEF}{D^\pi\F}

\begin{defn}{$(\F(S,A))$}\label{fukcatdef}
If $S$ is an oriented graded marked surface and $A$ is an arc system then \cite{HKK} defines an $\Ainf$-category $\F(S,A)$ with objects $\Ob(\F(S,A))=A$ given by the set of graded arcs in $A$. The morphisms in $\F(S,A)$ are $k$-linear combinations of boundary paths. 

Given two distinct arcs $X$ and $Y$ in $A$, a {\em boundary path} from $X$ to $Y$ is a non-constant path which is contained in a marked interval and which starts on an endpoint of $X$, follows the reverse orientation of the boundary 
and ends on an endpoint $Y$. This means that a boundary path has the surface to its right.
When $X$ and $Y$ coincide, the trivial path $1_X$ is considered a boundary path. The {\em degree} of a boundary path $\gamma : [0,1] \to S$ from $X$ to $Y$ is given by
$$\vnp \ga := i_{\ga(0)}(X,\ga) - i_{\ga(1)}(Y,\ga)$$
for any grading of $\ga$. 

The $\Ainf$-structure on $\F(S,A)$ is defined below.
\begin{description}
\item[$(\mu_1)$] The map $\mu_1$ is always zero.
\item[$(\mu_2)$] The map $\mu_2$ is given by concatenation of boundary paths: if $a$ and $b$ can be concatenated 
then 
$$\mu_2(b,a) := (-1)^{\vnp a} a\cdot b,$$ 
otherwise, $\mu_2(b,a) := 0$.
\item[$(\mu_m)$] Suppose that $(D^2,\A)$ is a disk with $m$ marked intervals and $m \geq 3$. Let $\A$ be the boundary arcs and $\{c_1,\ldots,c_m\}$ the boundary paths between them ordered cyclically according to the reverse disk orientation. Then a {\em disk sequence} is a collection of boundary paths 
$\{f\circ c_1,\ldots,f \circ c_m\}$ in $(S,A)$ for some map $f : (D^2,\A) \to (S,A)$ of marked surfaces. 

If $a_1,\ldots,a_m$ is a disk sequence and $b$ is a boundary path then 
$$\mu_m(a_m,\ldots,a_1\cdot b) := (-1)^{\vnp b} b \conj{ or } \mu_m(b\cdot a_m,\ldots, a_1) := b$$
when $a_1\cdot b\ne 0$ or $b\cdot a_m\ne 0$ respectively; otherwise the map $\mu_m$ is defined to be zero.
\end{description}
\end{defn}

Notice that the definition above only depends on the underlying ribbon graph
together with a choice of foliation data $f : \Ga \to \ZZ$. This foliation data can be used to describe the degrees of maps in the Fukaya category. If $\a_i : h_i \to h_{i+1}$ is a boundary path from the edge dual to $h_i$ to the edge dual to $h_{i+1}$, then the intersection index determines a degree
$$\vnp{\a_i} := i_p(h_i,\a_i) - i_q(h_{i+1},\a_i)$$
where $\a_i$ is an arbitrarily graded path from $p$ on the arc associated to $h_i$ to $q$ on the arc associated to $h_{i+1}$. This degree agrees with the value of the foliation data; in the sense that $f(h_i) = \vnp{\a_i}$.

\begin{defn}\label{splitcldef}
If $\aC$ is an $\Ainf$-category then there is a triangulated category
$D^\pi\aC$ called the {\em split-closed derived category}. 
It is uniquely characterized by the property of being the 
smallest split-closed triangulated subcategory of the homotopy category of $\aC$-modules $H^0(\aC\module)$
containing the image of the Yoneda embedding.
For other details, see \cite[\S 4.2.2]{sillyman1} and references therein. 
\end{defn}

\begin{defn}\label{scfukcatdef}
The {\em split-closed derived Fukaya category} $D^\pi\F(S,A)$ is the split-closed derived category of the Fukaya category $\F(S,A)$ introduced above.
  \end{defn}

\subsection{Balanced gradings} \label{balancedsec}
A balanced foliation is one which will allow the formation of graded closed curves. This brief section introduces balanced foliations and establishes a few of their elementary properties.

\begin{defn}\label{balanceddef}
A {\em double pair} consists of two arcs $X$ and $Y$ in a graded marked surface $(S,M)$ and two distinct non-trivial boundary paths $\ga, \ga' : X \to Y$ from $X$ to $Y$. A double pair is {\em balanced} when the degrees of the paths $\vnp{\ga} = \vnp{\ga'}$ agree.
A foliation $\eta$ on a surface $S$ is {\em balanced} if all double pairs are balanced.
\end{defn}

\begin{rmk}
The arcs $S$ and $T$ in the annuli $K_{m,n}$ of Fig. \ref{fig:standardarcs} are a balanced pair.
  \end{rmk}

There is a different formulation of this definition. A foliation $[\eta]\in \gr(S)$ gives a section $\eta\in [\eta]$, so that $\eta : (S, \partial S)\to (\PP(TS), \partial\PP(TS))$ which gives a class $\eta_*[S,\partial S]\in H_2(\PP(TS),\partial \PP(TS))$.  Poincar\'{e} duality determines a class $D\eta_*[S,\partial S]\in H^1(\PP(TS))$, so the Kronecker pairing gives a map
\begin{equation}\label{asteq}
\ast : \gr(S) \times H_1(\PP(TS)) \to \ZZ \conj{ where } \eta \ast [\ga] := \inp{D\eta_*[S,\partial S], [\ga]}.
\end{equation}
In particular, any immersed curve $\ga : S^1 \to S$ in $S$ gives a section $s_\ga = (\ga, [\dot{\ga}]) : S^1 \to \PP(TS)$ and so a class $[\ga] = (s_\ga)_*[S^1] \in H_1(\PP(TS))$. 

The pairing Eqn. \eqref{asteq} allows us to associate an integer to each
immersed curve, and this number has an equivalent, geometric, interpretation as a winding number or
the algebraic intersection number between the fixed field $\eta_*[S,\partial S]$ and the class $(s_\ga)_*[S^1]$. This is the number of times the tangent vector
of $\ga$ winds around the foliation as one traces through the curve. In more
detail, suppose $S_t$ defines a path from $[\ga'(t_0)]$ to
$\eta_{\ga(t_0)}$. This path can be extended in an interval
$(t_0-\e,t_0+\e)$, and then to all of $\ga$ using a finite cover. So there is
the initial path $S_t$ and the path $T_t$ from $\ga'(t_0)$ to
$\eta_{\ga(t_0)}$ obtained from traversing $\ga$ once. The composition
$ST^{-1}$ is a path from $\eta_{\ga(t_0)}$ to itself, and the pairing becomes
$$\eta\ast\ga := ST^{-1} \in \pi_1(\PP(TS),\ga(t_0)) \cong \ZZ$$
where the identification with $\ZZ$ is again determined by the orientation of $S$. 

Since $S$ has boundary, $TS$ is trivial so the fiber bundle $\RR P^1 \to \PP(TS) \to S$ gives a non-canonically split short exact sequence
$$0\from H^1(\RR P^1) \xfrom{i^*} H^1(\PP(TS)) \xfrom{p^*} H^1(S) \from 0.$$
Since a class in $H^1(\PP(TS))$ is dual to $\eta_*[S,\partial S]$ when its algebraic intersection with the vertical fiber is $1$, $i^*(D\eta_*[S,\partial S])([\RR P^1]) = 1$, the set of homotopy classes of foliations can be identified with a subset $\gr(S) = (i^*)^{-1}(V)$ where $V \in H^1(\RR P^1)$ satisfies $\inp{V, [\RR P^1]} = 1$.  The map $p^*$ gives $\gr(S)$ the structure of an $H^1(S)$-torsor, if $c\in H^1(S)$ then $[\eta + c] = [\eta] + p^*c$, see \cite[\S 1.1]{Lekili2}.

The algebraic condition in Def. \ref{balanceddef} is equivalent to the geometric condition in the proposition below.

\begin{prop}
A foliation $\eta$ is balanced if and only if $\eta \ast \ga = 0$ for all
unobstructed embedded closed curves $\ga$ in $S$.
\end{prop}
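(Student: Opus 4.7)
The plan is to translate the biconditional into a direct correspondence between the algebraic degree of a boundary path and the geometric winding number of a closed curve against the foliation, using the formula $|\alpha| = i_{\alpha(0)}(X,\alpha) - i_{\alpha(1)}(Y,\alpha)$ together with the geometric interpretation of $\eta \ast [\gamma]$ discussed above.

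For the implication ($\Leftarrow$), given a double pair $(X, Y, \gamma, \gamma')$, I will form the closed loop $c = \gamma \cdot \beta_Y \cdot (\gamma')^{-1} \cdot \beta_X$, where $\beta_X$ and $\beta_Y$ are the portions of the arcs $X$ and $Y$ that join the relevant endpoints of the two boundary paths. A small isotopy pushes $c$ into the interior of $S$ as an immersed closed curve, which after simplification by bigon moves decomposes as a union of unobstructed embedded closed curves; by additivity, it suffices to compute $\eta \ast c$. The arc portions $\beta_X, \beta_Y$ contribute zero to the winding number against $\eta$, since the tangent vector of a graded arc is aligned by construction with a fixed path to the foliation (its own grading). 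What remains is $\eta \ast c = |\gamma| - |\gamma'|$, which must vanish by hypothesis, forcing $\eta$ to be balanced.

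For the implication ($\Rightarrow$), let $\gamma$ be an unobstructed embedded closed curve and fix a full arc system $A$. Isotope $\gamma$ to intersect $A$ transversely and minimally, so that $\gamma$ is cut into a cyclic sequence of segments $\sigma_1, \ldots, \sigma_k$, each contained in a disk $D_i$ of $S \setminus A$. Within each contractible disk $D_i$, the segment $\sigma_i$ is homotopic rel endpoints to a concatenation of boundary paths between arcs of $A$, and its contribution to $\eta \ast \gamma$ equals a signed sum of the values $f(h)$ of the foliation data at the traversed half-edges. Assembling the contributions and using that $\gamma$ closes up, $\eta \ast \gamma$ becomes a global alternating sum of degrees of boundary paths along $\gamma$; the balanced condition pairs up degrees corresponding to the same pair of arcs, causing this sum to telescope to zero.

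The main technical obstacle is the bookkeeping in the forward direction: one must match each local winding contribution to precisely the pair of boundary path degrees that the balanced condition equates, being careful with signs at the transversal crossings of $\gamma$ with the arcs of $A$. This reduces to a local calculation inside a single disk, using the intersection-index formula together with the defining identity $\sum_{h \in v} f(h) = |v| - 2$ for foliation data, and may be cleanly organized using a fully formal arc system as in Def.~\ref{fullyformaldef}, where a distinguished missing boundary arc in each disk absorbs reparametrization ambiguity.
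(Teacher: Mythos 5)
Your ($\Leftarrow$) direction is essentially the paper's argument: given a double pair you close up the two boundary paths with pieces of $X$ and $Y$ into a loop $\ga$ and reduce everything to the identity $\eta\ast\ga=\vnp{\ga_1}-\vnp{\ga_2}$; the paper does exactly this (and handles the embeddedness issue by remarking that the pairing factors through homology, rather than via your bigon/additivity step, which is an overcomplication since the loop built from a double pair is already embedded after smoothing corners). One repair is needed in your justification that the arc portions contribute nothing: a grading of an arc is a homotopy class of path from its tangent line to the foliation, not an alignment of the tangent with the foliation, so the tangent of $X$ or $Y$ may well wind relative to $\eta$. The paper fixes this by first homotoping the line field so that $\eta$ is perpendicular to $X$ and $Y$ along those arcs, after which the arc pieces genuinely contribute zero and the identity holds; with that normalization your computation goes through verbatim. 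For the ($\Rightarrow$) direction you genuinely diverge: the paper simply reverses the same equivalence ("the proof of the converse is the same"), implicitly treating an arbitrary unobstructed embedded closed curve as a loop of the form $X\cdot\a\cdot Y\cdot\b^{-1}$, whereas you cut the curve along a full arc system, express each disk segment's winding contribution through the foliation data $f(h)$, and argue the total telescopes by balancedness. Your route is more general in spirit and closer to a complete argument for arbitrary curves, but the telescoping bookkeeping you defer is precisely the content that must be checked (matching each local contribution with a double-pair equality, with signs at the crossings), so as written it is no more complete than the paper's one-line converse; the paper's approach buys brevity at the cost of leaving the reduction of a general curve to a double-pair loop implicit, which is harmless for the annuli actually used later, where the essential curves are exactly of that form.
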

\begin{proof}
It is only necessary to consider embedded curves because the pairing factors through homology.
Suppose $\eta \ast \ga = 0$ for all closed curves $\ga$. Up to homotopy, we
can push the line field defining $\eta$ up so that it is perpendicular to
both $X$ and $Y$. If $\a, \b : X\to Y$ are two distinct non-trivial paths
between internal arcs $X$ and $Y$ then concatenating them with $X$ and $Y$
forms a loop $\ga := X\cdot \a \cdot Y\cdot \b^{-1}$. The condition that the
$\vnp{\a} = \vnp{\b}$ is equivalent to the condition that $\eta \ast \ga = 0$.  The proof of the converse is the same.
 % if $\ga$ is simple. If $\ga$ is not simple then pass to a finite sheeted cover.
\end{proof}

\begin{cor}\label{balancedprop}
%There is one balanced foliation.
There is one balanced foliation on the annulus.
  \end{cor}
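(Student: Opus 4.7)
The plan is to combine the preceding proposition with the torsor structure of $\gr(S)$. By the proposition, a foliation $\eta$ on the annulus $S$ is balanced if and only if $\eta \ast \ga = 0$ for every unobstructed embedded closed curve $\ga$. On the annulus, any embedded closed curve is either null-homotopic or boundary-parallel (hence bounds a disk or a sub-annulus and is obstructed in the Fukaya-category sense) or else is isotopic to the core circle $\ga_0$. So the balanced condition reduces to the single equation $\eta \ast [\ga_0] = 0$, where $[\ga_0] = (s_{\ga_0})_\ast[S^1] \in H_1(\PP(TS))$.

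Next I would use the torsor structure recalled just before the proposition: $\gr(S)$ is an $H^1(S)$-torsor via $[\eta + c] = [\eta] + p^\ast c$. For the annulus, $H^1(S)\cong \ZZ$, generated by a class dual to the core. Unwinding the definition of $\ast$, shifting $\eta$ by $c\in H^1(S)$ changes the pairing by
\[
(\eta + c)\ast[\ga_0] \;=\; \eta\ast[\ga_0] \;+\; \inp{c, p_\ast[\ga_0]},
\]
so the map $\Phi\co \gr(S)\to\ZZ$, $\eta\mapsto \eta\ast[\ga_0]$, is affine over the homomorphism $H^1(S)\to\ZZ$, $c\mapsto \inp{c, p_\ast[\ga_0]}$. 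Since $p_\ast[\ga_0]$ generates $H_1(S)\cong\ZZ$, this homomorphism is an isomorphism, hence $\Phi$ is a bijection of sets. In particular, there is a unique $[\eta]\in\gr(S)$ with $\Phi([\eta])=0$, which is the unique balanced foliation.

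The main (very minor) subtlety is the classification of unobstructed embedded closed curves on the marked annulus: one must check that null-homotopic and boundary-parallel loops do not impose additional balanced conditions, so that the reduction to the single class $[\ga_0]$ is legitimate. This follows because the pairing factors through $H_1(\PP(TS))$ and those loops either lift to torsion-free trivial classes in the base or cobound with boundary components that are already controlled by the definition of foliation data in Def.~\ref{foldatadef}; in either case the equation $\eta\ast\ga = 0$ is automatic once $\eta\ast[\ga_0]=0$, so the single-curve reduction is genuine.
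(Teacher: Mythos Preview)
Your argument is correct and is essentially the same idea as the paper's, packaged slightly differently. Both proofs show that the map $\gr(S)\to\ZZ$, $\eta\mapsto \eta\ast[\ga_0]$, is a bijection, so that $0$ has a unique preimage. You deduce this from the $H^1(S)$-torsor structure already set up in the paper (your affine map over an isomorphism $\ZZ\to\ZZ$), whereas the paper argues more concretely: it trivializes $\PP(TS)\cong S\times S^1$, identifies $\gr(S)$ with $[S,S^1]\cong[S^1,S^1]\cong\ZZ$ via degree, and then observes that $\deg(\eta)=\eta\ast[S^1\times\{0\}]$. The torsor formulation has the mild advantage that it would generalize verbatim to any surface with $H_1\cong\ZZ$, while the paper's version is shorter and more hands-on for the annulus specifically.

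One small wrinkle in your first paragraph: on the annulus there is no separate isotopy class of ``boundary-parallel'' embedded closed curves distinct from the core; every non-nullhomotopic embedded closed curve is already isotopic to $\ga_0$, and such curves are \emph{not} obstructed. So that clause is vacuous. This does not affect your argument, since your actual reduction---only the core class matters because the pairing factors through homology and $H_1(S)$ is generated by $[\ga_0]$---is correct as stated.
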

\begin{proof}
  Let $S := S^1\times [0,1]$. Since $S^1$ is a Lie group and $[0,1]$ is contractible, the tangent bundle of the annulus is trivial. This implies $\PP(TS) \cong S\times \PP(\RR^2) \cong S\times S^1$. Then a section $\eta : S \to \PP(TS)$ is a map $S\to S^1$ and, up to homotopy, such maps are in bijection with maps $S^1\to S^1$, which are classified (up to homotopy) by degree. Now under this identification, $\deg(\eta) = \eta \ast [S^1\times \{0\}]$ so that the only balanced foliation is the one which corresponds to the degree zero self-map of the circle.

\end{proof}

\subsection{Hall algebras}\label{hallsec}
As a vector space, the derived Hall algebra of a triangulated category $\aT$ satisfying certain finiteness conditions is given by $\QQ$-linear combinations of isomorphism classes of objects
$$\DHa(\aT):=\QQ\inp{\Ob(\aT)/iso}.$$
If $X,Y\in \Ob(\aT)/iso$ then their product is given by
$$XY := \sum_{L} F^{L}_{X,Y} L$$
Here $F^L_{X,Y}$ are To\"{e}n's structure constants
\begin{equation}\label{toeneq}
 F^L_{X,Y} := \frac{\vnp{\Hom(X,L)_Y}}{\vnp{\Aut(X)}}\cdot \frac{\prod_{n > 0} \vnp{\Ext^{-n}(X,L)}^{(-1)^n}}{\prod_{n > 0} \vnp{\Ext^{-n}(X,X)}^{(-1)^n}} 
\end{equation}  
for $X,Y,L \in \Ob(\aT)/iso$ where $\Hom(X,L)_Y := \{ f\in \Hom(X,L) : C(f)\cong Y \}$.

When the marked surface $(S,M)$ is finitary, Prop. 4.10 in \cite{sillyman1}
shows that there is a meaningful definition of the derived Hall algebra of the Fukaya
category $\aT = \DEF(S,A)$ for any full arc system $A$ when the coefficient field is a finite field $\FF_q$.

The {\em Euler form} is the map $\inp{\cdot,\cdot} : K_0(\aT) \ott K_0(\aT) \to \ZZ$ given by
\begin{equation}\label{eulerformeq}
  \inp{X,Y} := \sum_{n\in\ZZ} (-1)^n \dim \Ext^n(X,Y).
\end{equation}  
It can be symmetrized by setting $X\cdot Y := \inp{X,Y} + \inp{Y,X}$. Sometimes the notation $(X,Y) := X\cdot Y$ is used.
This defines a symmetric bilinear form on the Grothendieck group $K_0(\aT)$ which can be identified with the Cartan matrix for the derived category of modules over an acyclic quiver.

The {\em twisted derived Hall algebra} $\tDHa(\aT) := \DHa(\aT)\ott_{\QQ} \QQ(q)$ has %\QQ(\sqrt{q})
the same basis, with multiplication twisted by the Euler form
$$X*Y := q^{\inp{Y,X}} XY.$$
(See the value of $q$ in \S\ref{kcglossarysec}.)

\subsection{Subalgebras of Hall algebras}\label{subalgsec}

If $(S,M)$ is a finitary surface endowed with a full arc system $A$ then the derived Hall algebra $\tDHa(\DEF(S,A))$ of the Fukaya category is too big.
The composition subalgebras introduced below are much more reasonable objects to study.

\begin{defn}\label{fcompalgdef}
The {\em composition subalgebra} $\Alg(S,A)$ is the image of the canonical map
$$\kappa : F(\ZZ A) \to \tDHa(\DEF(S,A))$$
from the free $\QQ(q)$-algebra on the set $\ZZ A$ of suspensions of arcs to the derived Hall algebra.
\end{defn}

This composition subalgebra is an analog of the composition subalgebra of the Hall algebra of quiver representations, which is the subalgebra generated by simple modules. This construction satisfies the following functoriality property, see \cite[Cor. 5.11]{sillyman1}.

\begin{thm}{(Embedding)}\label{embthm}
Suppose that $(S,M)$ and $(S',M')$ are marked surfaces equipped with full arc systems $A$ and $A'$. If $f : (S,M) \to (S',M')$ is an embedding which induces an injection $\pi_0(M) \to \pi_0(M')$ between sets of components of marked intervals then $f$ induces a monomorphism $f_* : \Alg(S,A) \to \Alg(S',A')$.
\end{thm}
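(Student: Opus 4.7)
The strategy is to promote $f$ to a fully faithful strict $\Ainf$-functor of Fukaya categories, descend to an exact fully faithful functor of split-closed derived categories, and then observe that To\"en's structure constants are preserved so that the induced map of Hall algebras restricts to an injection on composition subalgebras.

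\emph{Step 1 (the $\Ainf$-functor).} By Definition \ref{fukcatdef} the category $\F(S,A)$ has objects the graded arcs of $A$ and morphisms spanned by boundary paths, with $\mu_m$ controlled by disk sequences. Since $f$ is a map of marked surfaces that sends $A$ into $A'$ and boundary arcs to boundary arcs, we obtain a map $f_*\co A \to A'$ on objects, and $f$ carries boundary paths and disk sequences in $(S,A)$ to boundary paths and disk sequences in $(S',A')$. Moreover the hypothesis that $\pi_0(M)\hookrightarrow\pi_0(M')$ is injective guarantees that two non-isotopic arcs in $A$ have distinct endpoint-intervals after applying $f$, so $f_*$ is injective on objects. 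These assignments are strictly compatible with $\mu_2$ (concatenation) and $\mu_m$ (disk contractions) and hence define a strict $\Ainf$-functor $F\co \F(S,A)\to \F(S',A')$.

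\emph{Step 2 (full faithfulness).} Fix $X,Y\in A$. Every boundary path in $(S',A')$ from $FX$ to $FY$ lies in a single marked interval of $M'$ whose two endpoints are endpoints of $FX$ and $FY$. Because $f$ is an embedding and the induced map $\pi_0(M)\hookrightarrow \pi_0(M')$ is injective, that marked interval is the image under $f$ of a unique marked interval in $M$ containing preimages of those endpoints; consequently the boundary path lifts uniquely to a boundary path in $(S,A)$ from $X$ to $Y$. Faithfulness is automatic since the lift is unique, and together these give that $F$ is fully faithful.

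\emph{Step 3 (passing to $D^\pi\F$ and Hall algebras).} A fully faithful strict $\Ainf$-functor extends by the universal property of Definition \ref{splitcldef} to a fully faithful exact functor $\widetilde F\co D^\pi\F(S,A)\to D^\pi\F(S',A')$. Full faithfulness preserves every $\Ext^n$ group, hence preserves the Euler form \eqref{eulerformeq}. Since cones of morphisms in the essential image of $\widetilde F$ are themselves in the essential image, inspecting To\"en's formula \eqref{toeneq} shows that for all $X,Y,L$ in $D^\pi\F(S,A)$,
\[
F^{L}_{X,Y} \;=\; F^{\widetilde FL}_{\widetilde FX,\widetilde FY},
\]
because each ingredient $\vnp{\Hom(X,L)_Y}$, $\vnp{\Aut(X)}$, and $\vnp{\Ext^{-n}(X,L)}$ is computed purely from Hom-sets and cones. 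Consequently the $\QQ(q)$-linear extension of $\widetilde F$ defines an injective algebra homomorphism of twisted derived Hall algebras onto the $\QQ(q)$-span of the isomorphism classes of objects in the essential image of $\widetilde F$.

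\emph{Step 4 (descent to composition subalgebras).} The composition subalgebra $\Alg(S,A)$ is by Definition \ref{fcompalgdef} the image of the free algebra on $\ZZ A$, and $\widetilde F$ sends $\sigma^n X$ to $\sigma^n FX\in \ZZ A'$. Thus the restriction of the Hall algebra map sends $\Alg(S,A)$ into $\Alg(S',A')$, and its injectivity is the injectivity established in Step 3.

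\emph{Main obstacle.} The crux is Step 2: ruling out ``new'' boundary paths in $(S',A')$ between $FX$ and $FY$ whose interior leaves the image of $f$. This is exactly the place where the hypothesis that $\pi_0(M)\hookrightarrow \pi_0(M')$ is injective is used; without it, two arcs of $A$ could share a marked interval in $S'$ that they did not share in $S$, producing unwanted boundary paths and destroying both fullness and injectivity of $F$ on objects.
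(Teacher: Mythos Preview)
The paper does not prove this theorem; it is quoted from the companion paper \cite[Cor.~5.11]{sillyman1} and used here as a black box. So there is no in-paper proof to compare against, only the cited reference.

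Your strategy is the natural one and is essentially what the referenced proof does: produce a strict $\Ainf$-functor on Fukaya categories, argue full faithfulness, and deduce an injective map of Hall algebras that restricts to composition subalgebras. Two small points deserve tightening. In Step~1 your justification of injectivity on objects is muddled: the $\pi_0$ hypothesis is not what forces non-isotopic arcs to stay non-isotopic; that comes from $f$ being an embedding (and in any case injectivity on objects is subsumed by full faithfulness). In Step~2 the sentence ``that marked interval is the image under $f$ of a unique marked interval in $M$'' is not quite right: the marked interval of $M'$ may strictly contain $f$ of the relevant interval of $M$. What you actually need (and what the $\pi_0$ hypothesis gives) is that the two arc-endpoints in question have preimages lying in a \emph{single} marked interval of $M$; the corresponding boundary path in $(S,A)$ then maps to the given one, which is enough for fullness. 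With those clarifications your Steps~3--4 go through: full faithfulness makes the essential image a triangulated subcategory, so every ingredient of \eqref{toeneq} is preserved and the induced linear map on isomorphism classes is an injective algebra homomorphism.
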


\subsection{Disk presentations and gluing statements}\label{gluingconjecturesec}
In this subsection we first recall the presentation of the composition subalgebra $\Alg(D^2,
\A_3)$ associated to the disk with three boundary arcs from \cite{sillyman1}. Second
the algebra associated to disk formed by a gluing a family of such disks $(D^2,
\A_3)$ is recalled. For more detail and discussion see \cite{sillyman1}.

Theorem 5.16 of \cite{sillyman1} introduces a presentation for the composition subalgebra $\Alg(D^2, \A_m)$ of a disk $(D^2,m)$ with $m$ boundary arcs equipped with the minimal full arc system $\A_m$. (It is a minimal arc system because there are no internal arcs.) An important special case occurs when $m=3$. 
\vspace{.1in}
\begin{center}\label{fig:triangledisk}
\begin{overpic}[scale=.8]%,grid,tics=10]
{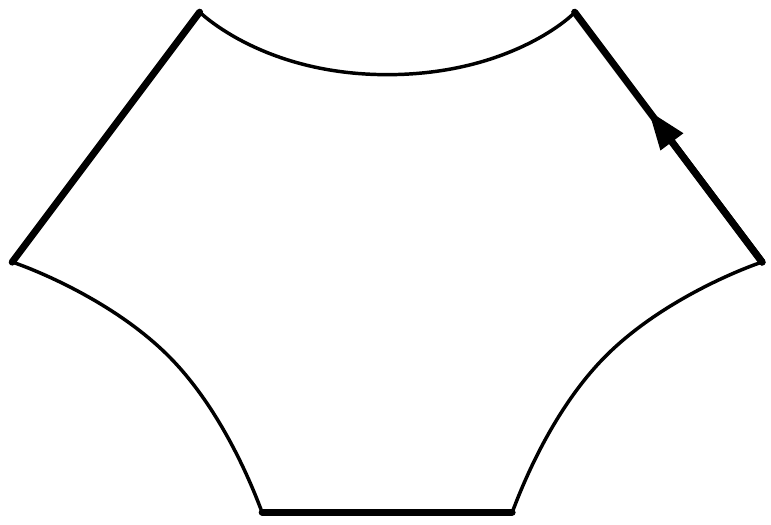}
\put(85,113){$\E_1$}
\put(17,25){$\E_3$}
\put(150,25){$\E_2$}
\put(163,90){$\a_1$}
\put(7,90){$\a_3$}
\put(83,-10){$\a_2$}
\end{overpic}
\end{center}
\vspace{.1in}

In the illustration above, the arc system consists of the three boundary arcs $\A_3 := \{\E_1,\E_2,\E_3\}$. As the boundary of the disk is oriented counterclockwise, there are boundary arcs $\a_i : \E_i\to \E_{i+1}$ for $i\in \ZZ/3$. The foliation data $h : \A_3 \to \ZZ$ satisfies $h(\E_i) = \vnp{\a_i}$. 

In the special case $m=3$, the presentation of $\Alg(D^2,\A_3)$ becomes the corollary below.

\begin{cor}\label{diskprescor}
Suppose that the disk $(D^2,3)$ with three marked intervals is equipped with a minimal arc system $\A_3 = \{\E_1,\E_2,\E_3\}$ and foliation data $\h : \A_3 \to \ZZ$.  Then the algebra $\Alg(D^2,\A_3)$ has only two families of relations, 
\begin{description}
  \item[(R1)] Self-extension:
\begin{align*}
[\E_{i,0},\E_{i,k}]_{q^{2(-1)^k}} &=\d_{k,1}\frac{q^{-1}}{q^2-1}\conj{ for } k \geq 1,
\end{align*}
\item[(R2)] Adjacent commutativity and convolution:
\begin{align*}
[\E_{i+1,k},\E_{i,h(i)}]_{q^{(-1)^{k+1}}} & = \d_{k,1} \E_{i+2,1-h(i+1)} &\conj{ for } k \geq 1 \notag,\\
[\E_{i+1,k},\E_{i,h(i)}]_{q^{(-1)^{k}}} &= 0 & \conj{ for } k<1\notag.
\end{align*}
\end{description}
\end{cor}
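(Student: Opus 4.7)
The plan is to derive this as the special case $m=3$ of the general disk presentation given by Theorem 5.16 of \cite{sillyman1}. First I would restate the general presentation of $\Alg(D^2,\A_m)$, which for arbitrary $m\geq 3$ organizes the relations among the generators $\E_{i,n}=\sigma^n\E_i$ (with $i\in\ZZ/m$ and $n\in\ZZ$) into four groups: self-extensions of each arc, commutativity relations for arcs that are not adjacent along $\partial D^2$, adjacent commutativity/convolution relations encoded by the boundary paths $\a_i\co \E_i\to \E_{i+1}$, and possibly triangle-type relations arising from disk sequences. The task is to show how this list collapses in the $m=3$ case to precisely (R1) and (R2).

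Next, I would specialize. The self-extension relations (R1) are intrinsic to each object $\E_i$ and do not depend on $m$, so they transfer unchanged from the general presentation. The key observation for the remaining relations is that when $m=3$, every pair of distinct arcs $(\E_i,\E_j)$ is adjacent, i.e.\ there is a boundary path $\a\co \E_i\to \E_j$ or $\a\co \E_j \to \E_i$ in the cyclic ordering. Consequently the family of ``commutativity at a distance'' relations from the general presentation is vacuous for $m=3$, and the only cross-arc relations come from the adjacent family. This adjacent family is indexed by the cyclic pairs $(\E_i,\E_{i+1})$ with $i\in\ZZ/3$, and the foliation data enters through the exponent $h(i)=\vnp{\a_i}$; reading off the exponents from Theorem 5.16 of \cite{sillyman1} produces the two $q$-commutator relations in (R2).

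The only genuinely $m=3$-specific feature is the appearance of $\E_{i+2,1-h(i+1)}$ on the right-hand side of the first (R2) relation. I would explain this as the convolution term coming from the unique disk sequence $\a_1,\a_2,\a_3$ in $(D^2,\A_3)$: the $\mu_3$ operation from Def.\ \ref{fukcatdef} produces a cone relating $\E_i$, $\E_{i+1}$ and $\E_{i+2}$, and tracking the grading shifts imposed by the foliation data constraint $\sum_i h(i) = \vnp{v}-2 = 1$ from Def.\ \ref{foldatadef} yields the suspension index $1-h(i+1)$. The main obstacle here is the bookkeeping: verifying that the sign of the $q$-commutator, the direction of the convolution (which arc appears on the right), and the grading shift on $\E_{i+2}$ all correspond correctly to what Theorem 5.16 of \cite{sillyman1} gives when specialized to $m=3$.

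Finally, I would note that no further relations survive: the general presentation contains no relations of length higher than the convolution ones for $m=3$, because all disk sequences in $(D^2,\A_3)$ are cyclic rotations of $(\a_1,\a_2,\a_3)$, and so the list (R1)--(R2) is complete. This completes the derivation of the corollary from the theorem.
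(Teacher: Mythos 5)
Your proposal is correct and matches the paper's own treatment: the corollary is obtained exactly as you describe, by specializing the general presentation of $\Alg(D^2,\A_m)$ from Theorem 5.16 of \cite{sillyman1} to $m=3$, where the far-commutativity relations become vacuous and the convolution term collapses to the single remaining arc $\E_{i+2,1-h(i+1)}$ (consistent with Rmk.~\ref{keyrelrmk}). The paper gives no further argument beyond this specialization, so your route and its route coincide.
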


Due to the balancing condition in \S\ref{gradingsec} this paper will often assume that the foliation data $h$ takes a simplified form.

\begin{rmk}\label{keyrelrmk}
When $h(\E_1) = 0$, $h(\E_2) = 0$ and $h(\E_3) = 1$, the key (R2)-relations, when $k=1$ above, can be written as 
$$\E_1 = [\s \E_3, \E_2]_q,\quad\quad \E_2 = [\E_1, \E_3]_q \conj{ and } \E_3 = [\E_2, \s^{-1}\E_1]_q.$$
\end{rmk}

 The second result from \cite{sillyman1} is to do with the gluing of disks. 
 Suppose that $(D^2_1,\A_n)$ and $(D^2_2,\A_m)$ are two disks with minimal
  arc systems $\A_n = \{E_k\}_{k\in\ZZ/n}$ and $\A_m=\{F_k\}_{k\in\ZZ/m}$. Up to homotopy, the foliations of each disk can be chosen to be tangent to boundary arcs. So if $E_i\in\A_n$ and $F_j\in\A_m$ then there is a gluing defined by the quotient
$$(D^2_1,\A_n)\sqcup_{i,j} (D^2_2,\A_m) := (D^2_1 \sqcup_{i,j} D^2_2, \A_n \sqcup_{i,j} \A_m).$$
The glued disk $D^2_1 \sqcup_{i,j} D^2_2 = (D^2_1 \sqcup D^2_2) / (E_i\sim F_j)$ is formed
by the quotient which identifies the two boundary arcs on each disk. 
This
disk supports a minimal arc system
$\A_n\sqcup_{i,j}\A_m = (\A_n\backslash \{E_i\}) \sqcup
(\A_m\backslash\{F_j\})$.

\vspace{.15in}
\begin{center}\label{fig:tmpgluedisk}
\begin{overpic}[scale=0.7]%,grid,tics=10]
{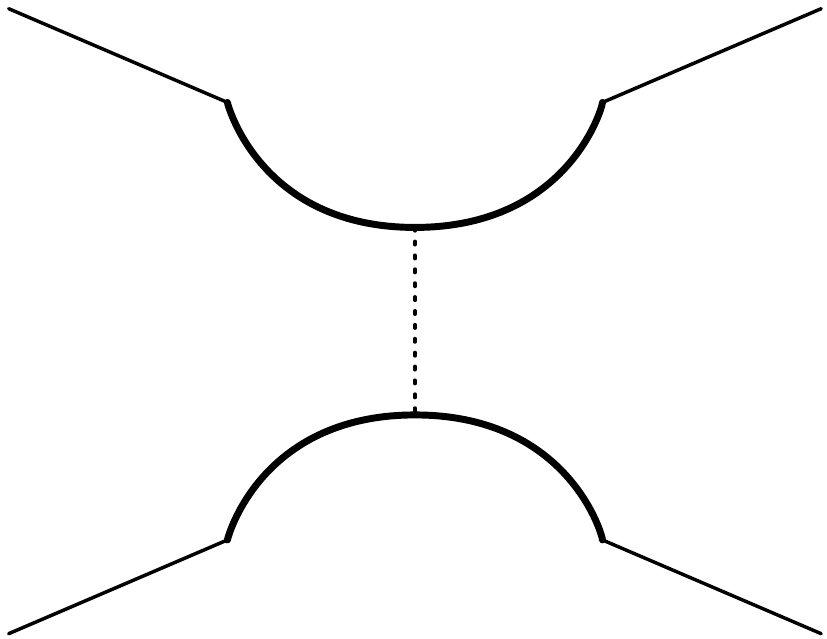}
\put(65,60){$E_i$}
\put(89,60){$F_j$}
\put(10,130){$E_{i-1}=G_{n-1}$}
\put(10,-5){$E_{i+1}=G_1$}
\put(110,130){$F_{j+1}=G_n$}
\put(110,-5){$F_{j-1}=G_{0}$}
\end{overpic}
\end{center}
\vspace{.15in}

If this minimal arc system is labelled by $\A_n\sqcup_{i,j}\A_m = \{G_k\}_{k\in\ZZ/(n+m-2)}$ then the foliation data is determined by Eqn. \eqref{gluefoleq}.

The statement below summarizes Theorem 5.23 in \cite{sillyman1} which shows how to glue disks algebraically.

\begin{thm}{(Gluing)}\label{gluethm}
The Hall algebra of the disk obtained by a gluing is isomorphic to the $\aprod$-product of Hall algebras of the disks. In more detail, there are isomorphisms
\begin{equation}\label{glueeq}
  \a : \Alg(D^2_1 \sqcup_{i,j} D^2_2, \A_n \sqcup_{i,j} \A_m) \rightleftarrows \Alg(D^2_1,\A_n)\aprod_{i,j} \Alg(D^2_2,\A_m) : \b,
  \end{equation}
where the algebra on the righthand side is given by the free product of the algebras $\Alg(D^2_1,\A_n)$ and $\Alg(D^2_2,\A_m)$ subject to the relations listed below.
\begin{description}
\item[(G1)\namedlabel{g1:itm}{\lab{G1}}] Gluing: 
$$F_{j,s} = E_{i,s}\conj{ for all } s\in\ZZ,$$
\item[(G3)\namedlabel{g3:itm}{\lab{G3}}] Far-commutativity: 
$$[E_{k,s}, F_{\ell,t}]_1 = 0 \conj{ for all } (k,\ell)\not\in Int \normaltext{ and } s, t\in\ZZ,$$
where $Int =\{ (i+1,j-1), (i,j-1), (i,j+1), (i-1,j), (i-1,j+1), (i+1,j)\}$.
\end{description}

\end{thm}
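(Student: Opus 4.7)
The strategy is to construct mutually inverse homomorphisms $\alpha$ and $\beta$. The map $\beta$ comes from functoriality: each component disk embeds into the glued disk, so Theorem~\ref{embthm} yields monomorphisms $\Alg(D^2_k,\A)\hookrightarrow \Alg(D^2_1\sqcup_{i,j}D^2_2,\A_n\sqcup_{i,j}\A_m)$ for $k=1,2$. The universal property of the free product of algebras gives a homomorphism out of this free product; to descend to $\aprod_{i,j}$, which by definition is the free product modulo (G1) and (G3), one checks that these two relations hold in the target. Relation (G1) is automatic since $E_i$ and $F_j$, along with all their suspensions, are literally identified in the glued disk. Relation (G3) holds because, for $(k,\ell)\notin Int$, the arcs $E_k$ and $F_\ell$ lie in non-adjacent regions of the glued boundary: no triangle of the glued disk has boundary paths joining them, so every relevant $\Hom$ and $\Ext$ group in~\eqref{toeneq} factors symmetrically and the $1$-commutator vanishes.

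The map $\alpha$ uses the disk presentation. Applying Theorem~5.16 of~\cite{sillyman1} to the glued disk, the composition subalgebra $\Alg(D^2_1\sqcup_{i,j}D^2_2,\A_n\sqcup_{i,j}\A_m)$ is presented by generators $G_{k,s}$ indexed by the arcs $\{G_k\}_{k\in\ZZ/(n+m-2)}$ and their suspensions, subject only to (R1)--(R2)-type relations with foliation data determined by~\eqref{gluefoleq}. Each $G_k$ is the image of a unique arc from either $\A_n\setminus\{E_i\}$ or $\A_m\setminus\{F_j\}$, so there is an obvious assignment of glued-presentation generators to elements of the free product. One then verifies that every defining relation of the glued presentation lifts to a relation in $\Alg(D^2_1,\A_n)\aprod_{i,j}\Alg(D^2_2,\A_m)$. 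The verification splits into three cases: (i) relations among arcs lying inside a single disk, which hold by that disk's presentation; (ii) relations among adjacent arcs straddling the glued edge, which require the identification (G1); (iii) relations among non-adjacent arcs from different disks, which are precisely (G3). Mutual inversion on generators is then immediate.

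The main obstacle is case (ii): the (R2) convolution relations among triples of arcs spanning the glued edge. For these one must use the foliation rule~\eqref{gluefoleq} to relate the glued foliation values $h(G_{i-1}),h(G_i),h(G_{n-1}),h(G_n)$ to the original ones, rewrite each glued generator as its preimage in $\A_n$ or $\A_m$, and show --- after applying $F_{j,s}=E_{i,s}$ from (G1) --- that the resulting twisted commutator is a consequence of a triangle relation already present in one of the original disk algebras. The bookkeeping of $q$-powers and suspension shifts is delicate but reduces to checking the six index pairs $(k,\ell)\in Int$ one by one.
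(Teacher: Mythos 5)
First, a point of reference: this paper does not actually prove Theorem \ref{gluethm}; it is quoted from Theorem 5.23 of \cite{sillyman1}, so there is no internal proof to compare your argument against line by line. Judged on its own terms, your architecture is the natural one and almost certainly parallels the original: $\b$ is induced by the two embeddings $(D^2_k,\A)\hookrightarrow (D^2_1\sqcup_{i,j}D^2_2)$ via Thm.~\ref{embthm} and the universal property of the free product, with \eqref{g1:itm} holding because $E_i$ and $F_j$ become literally the same internal arc, and \eqref{g3:itm} holding because arcs indexed outside $Int$ share no marked interval, so all $\Hom$ and $\Ext$ groups between them vanish in both directions and the Euler pairing is zero (this, rather than ``no triangle joins them,'' is the correct justification); $\a$ is defined on the generators of the presentation of the glued disk and one checks relations case by case.

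The genuine gap is that the decisive content of the theorem is exactly what your plan defers. The claim that \eqref{g1:itm} and \eqref{g3:itm} are the \emph{only} new relations amounts to showing that every (R1)/(R2)-type relation of the glued presentation that straddles the glued edge --- with foliation data shifted according to \eqref{gluefoleq} --- is a consequence of the two disk presentations together with \eqref{g1:itm} and \eqref{g3:itm}; you state this as ``delicate bookkeeping'' over the six pairs in $Int$ but do not carry out a single instance, and without at least one worked case it is not clear the $q$-powers and suspension shifts actually close up. A second elided point is mutual inversion: ``immediate on generators'' overlooks that $E_i=F_j$ is a generator of the right-hand side but not of the glued presentation (it is an internal arc of the glued disk), so verifying $\a\b(E_i)=E_i$ requires rewriting the internal arc as an iterated $q$-commutator of boundary arcs (Rmk.~\ref{diskrmk}, and the fact that any $M-1$ boundary arcs generate the disk algebra) and checking that this expression is respected by both maps. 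Until those two computations are done, what you have is a correct strategy and a correct easy half ($\b$ well defined), but not a proof of the isomorphism.
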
  

(The set $Int$ above is the set of pairs of boundary arcs which intersect the same marked interval in the glued disk.)

\begin{rmk}\label{diskrmk}
The theorem produces a natural presentation for the algebra
associated to a decomposition of a disk $\Alg(D^2, A)$ into a family of disks which have
been glued together. This implies that the inclusion $\A_n \subset A$ induces an isomorphism of algebras
$$\Alg(D^2,\A_n) \xto{\sim} \Alg(D^2, A).$$
So the presentation $\Alg(D^2,\A_n)$ in \cite[\S 5.2.1]{sillyman1} is implied by Thm. \ref{gluethm} and the presentation for $(D^2, \A_3)$ in Cor. \ref{diskprescor}.
\end{rmk}

It is interesting to ask whether the gluing theorem above extends to
surfaces which are more general than a disk. In the next section, we recall
the naive gluing conjecture of \cite{sillyman1} which suggests circumstances
in which this construction produces the correct result.

\section{The Naive Gluing conjecture}\label{naivesec}
Suppose that $(S,M)$ is a graded marked surface with an arc system $A$ such that the
inclusions of disks $(D_i, \A_{m_i})\hookrightarrow (S,A)$ in an arc
decomposition $S\backslash A = \sqcup_i (D_i,\A_{m_i})$ determine
monomorphisms $\Alg(D_i, \A_{m_i}) \hookrightarrow \Alg(S,A)$. Then it was
conjectured in \cite[\S 5.5]{sillyman1} that the gluing relations in Theorem
\ref{gluethm} yield a presentation for the composition subalgebra
$\Alg(S,A)$. This is the Naive Conjecture \ref{naiveconj}.

In this section this conjecture is reviewed and shown to satisfy several
naturality properties with respect to the topology of surfaces.
Thm. \ref{naiveeqvthm} shows that the naive conjecture commutes with certain Pachner
moves between arc systems and Prop. \ref{nembthm} shows that the naive
conjecture commutes with the embedding theorem.

\subsection{The conjecture}\label{naiveconjsec}

This section is organized as follows. First Def. \ref{naivedef} recalls the
extension of the gluing construction in Thm. \ref{gluethm} to all
surfaces. Thm. \ref{prop:naivemaps} recalls the comparison map $\ga_A$
associated to an arc system $A$. The Naive Conjecture \ref{naiveconj} states
that the map $\ga_A$ is an isomorphism when the marked surface $(S, M)$ with
arc system $A$ has enough marked intervals as in Def. \ref{def:enough}.

\begin{definition}\label{naivedef}
Suppose that $(S,M)$ is a graded marked surface and $A$ is a full arc system. By definition, the internal arcs of $A$ cut the surface into a collection of disks
$$(S, A) = (D^2_1,\A_{m_1}) \sqcup_{i_1,j_1} (D^2_2,\A_{m_2}) \sqcup_{i_2,j_2} \cdots \sqcup_{i_{\ell-1},j_{\ell-1}} (D^2_\ell,\A_{m_\ell}).$$ 

Since the surface $(S,A)$ can be reassembled by
gluing the $\ell$ disks together, identifying the arc $i_k$ in
$\A_{m_k}$ with the arc $j_k$ in $\A_{m_{k+1}}$, the gluing
theorem %allows us to construct the presentation of the {\em naive algebra} $\NAlg(S,A)$
suggests the following definition of the {\em naive algebra} $\NAlg(S,A)$
\begin{equation}\label{npeqn}
  \NAlg(S,A): = \Alg(D^2,\A_{m_1}) \aprod_{i_1,j_1} \Alg(D^2_2,\A_{m_2}) \aprod_{i_2,j_2} \cdots \aprod_{i_{\ell-1},j_{\ell-1}} \Alg(D^2_\ell,\A_{m_\ell}).
  \end{equation}
Roughly speaking, $\NAlg(S,A)$ is the free product of the disk algebras subject 
to the \eqref{g1:itm} and \eqref{g3:itm} relations from Thm. \ref{gluethm}.
More precisely, Eqn. \eqref{npeqn} is to be interpreted as a quotient of the free product of the disk algebras $\Alg(D^2_k,\A_{m_k})$ by two relations
\begin{description}
\item[\eqref{g1:itm}] If two arcs $E\in \A_{m_k}$ and $F \in \A_{m_{k'}}$ contained in distinct disks are identified by the gluing then they are identified in the naive algebra $\NAlg(S,A)$.
$$E=F$$

\item[\eqref{g3:itm}] Suppose that $E\in \A_{m_k}$ and $F\in \A_{m_{k'}}$ are contained in distinct disks. Then if the end points of $E$ and $F$ are not contained in some marked interval of $(S,M)$ then they are required to commute in the naive algebra $\NAlg(S,A)$.
$$[E,F]_1 = 0$$
  \end{description}
All relations are required to be closed under the action of suspension: if $R\in I$ is an element in the ideal $I$ generated by the relations above then $\s^nR \in I$ for all $n\in \ZZ$.
\end{definition}

The condition that $(S, M)$ has enough marked intervals is the key premise
of the naive conjecture below.

\begin{definition}\label{def:enough}
The marked surface and full arc system $(S, M, A)$ above is said to have {\em enough marked intervals} when $S\backslash A = \sqcup_{k=1}^N (D^2_k, M_k)$ and, for each $k$, the inclusion $\iota_k : (D^2_k,M_k)\to (S,M)$  is injective on connected components.
\begin{equation}\label{eq:embcond}
(\iota_k)_*: \pi_0(M_k) \to \pi_0(M) \textrm{ is injective}
\end{equation}
\end{definition}

When this criteria is satisfied, there is a comparison map $\ga_A$ from the
naive algebra to the composition subalgebra which is determined by mapping
the arcs in each disk $a\in \A_{m_i}$ to their representatives $a\in A$. In
\cite[Thm. 5.33]{sillyman1} %, %$\ga_A$ is shown to be surjective.
it is shown that $\ga_A$ exists and is surjective.

\begin{thm}\label{prop:naivemaps}
Suppose that  $(S,M)$ is a graded marked surface with a full arc system $A$. If $(S,M,A)$ has enough marked intervals then there is a surjective map 
$$\ga_A : \NAlg(S,A) \twoheadrightarrow \Alg(S,A)\conj{ where } \ga_A(a) := a \nt{ for all } a\in A.$$
\end{thm}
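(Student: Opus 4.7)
The plan is to construct $\gamma_A$ as the unique $\QQ(q)$-algebra homomorphism from the free product $\bigstar_k \Alg(D^2_k, \A_{m_k})$ which sends the generator corresponding to each arc $a \in \A_{m_k}$ to the arc $a \in A$ viewed as an element of $\Alg(S,A)$. To confirm that this map descends to the quotient defining $\NAlg(S,A)$, I would check three families of relations separately: the intrinsic relations holding within each disk algebra, the gluing relations \eqref{g1:itm}, and the far-commutativity relations \eqref{g3:itm}.

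For the intrinsic disk relations, the key input is the Embedding Theorem \ref{embthm}. The enough-marked-intervals hypothesis (Def.\ \ref{def:enough}) is exactly the statement that each inclusion $\iota_k : (D^2_k, M_k) \hookrightarrow (S, M)$ induces an injection $\pi_0(M_k) \to \pi_0(M)$. Thus by Thm.\ \ref{embthm} the induced map $(\iota_k)_* : \Alg(D^2_k, \A_{m_k}) \hookrightarrow \Alg(S, A)$ is a monomorphism of algebras, and in particular transports every relation of $\Alg(D^2_k, \A_{m_k})$ to a relation that genuinely holds in $\Alg(S, A)$. The \eqref{g1:itm} relation is essentially tautological: if $E \in \A_{m_k}$ and $F \in \A_{m_{k'}}$ are identified by the gluing used to reconstruct $(S, A)$, then they literally represent the same element of $A$, and hence the same element of $\Alg(S,A)$.

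The most delicate step is \eqref{g3:itm}. Suppose $E$ and $F$ lie in distinct disks and their endpoints share no marked interval of $(S,M)$. By Def.\ \ref{fukcatdef}, $\Hom^\bullet_{\F(S,A)}(E, F)$ is spanned by boundary paths from $E$ to $F$, and such a path must travel along the boundary inside a single marked interval from an endpoint of $E$ to an endpoint of $F$; the hypothesis forces this space to be zero in both directions for $E \ne F$. Passing to the derived category (Def.\ \ref{splitcldef}), the $\Ext^n$ groups therefore vanish in all degrees. The Euler form (Eqn.\ \eqref{eulerformeq}) gives $\langle E, F \rangle = 0$, and the Toën formula (Eqn.\ \eqref{toeneq}) reduces the Hall product to the untwisted concatenation, so that $E*F = F*E$ in $\tDHa(\DEF(S,A))$, which is precisely the stated commutator relation.

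Finally, surjectivity is immediate from the definition of $\Alg(S,A)$: by Def.\ \ref{fcompalgdef} the composition subalgebra is generated as a $\QQ(q)$-algebra by suspensions of arcs in $A$, and each such arc equals $\gamma_A(a)$ for the corresponding generator $a$ in some disk factor $\Alg(D^2_k, \A_{m_k})$. The main obstacle in this plan is verifying \eqref{g3:itm} carefully, since one must ensure that ``no shared marked interval'' really does kill every Hom and Ext, and that the resulting degree of the twisted product agrees with $1$; the argument above indicates why this reduction works, but checking it requires unpacking the definitions of the Fukaya $A_\infty$-structure and of the structure constants in the twisted derived Hall algebra.
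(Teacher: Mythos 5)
Your construction is correct and is essentially the argument the paper relies on: the paper itself only records this statement, deferring the proof to Thm.~5.33 of \cite{sillyman1}, and the route there is the same as yours --- assemble $\ga_A$ from the disk inclusions via the Embedding Theorem \ref{embthm}, observe that the \eqref{g1:itm} relations are tautological, deduce the \eqref{g3:itm} relations from the vanishing of all $\Hom$ and $\Ext$ groups between arcs whose endpoints share no marked interval, and get surjectivity because $\Alg(S,A)$ is by definition generated by suspensions of arcs in $A$. The verification you flagged does go through: with $\Ext^{n}(E,F)=\Ext^{n}(F,E)=0$ for all $n$, every relevant triangle splits, the structure constants in Eqn.~\eqref{toeneq} equal $1$, and the Euler-form twists are trivial, so $E*F=[E\oplus F]=F*E$ for all suspensions.
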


The following naive gluing conjecture \cite[Conj. 5.34]{sillyman1} is a refinement of the theorem above.

\begin{conjecture}\label{naiveconj}
If $(S, M, A)$ is a graded marked surface with enough marked intervals then the comparison map $\ga_A$ is an isomorphism.
  \end{conjecture}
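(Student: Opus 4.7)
Since Theorem \ref{prop:naivemaps} already establishes surjectivity of $\ga_A$, the content of Conjecture \ref{naiveconj} reduces to showing $\ga_A$ is injective. I would attack the principal case of interest, namely annuli $K_{m,n}$ with $m, n \geq 2$, by way of the quiver strategy foreshadowed in the introduction. The plan proceeds in three stages, each reducing a geometric statement to an algebraic computation.

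Step one is to reduce to a standardized arc system. Using Theorem \ref{naiveeqvthm} (invariance of the naive conjecture under Pachner moves) together with the balanced foliation supplied by Cor. \ref{balancedprop}, I would fix a convenient full arc system $A$ on $K_{m,n}$ that cuts the annulus into two disks glued along two internal arcs, exactly as in the $K_{2,2}$ example of the introduction. Relative to such $A$, the naive algebra $\NAlg(K_{m,n}, A)$ admits an explicit presentation: two copies of the disk presentation of Cor. \ref{diskprescor} (extended via Remark \ref{diskrmk}), bound together by the \eqref{g1:itm} and \eqref{g3:itm} relations.

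Step two is to identify the split-closed derived Fukaya category $\DEF(K_{m,n}, A)$ with the bounded derived category $D^b(\Rep_\kk Q)$ of a suitable acyclic quiver $Q$, whose underlying shape and orientation are dictated by the foliation data on $A$. An identification of this kind is consistent with the mirror picture involving weighted projective lines \cite{BSWPL} referenced in the introduction. Under such an identification, the simples of $D^b(\Rep_\kk Q)$ correspond bijectively to the graded arcs $E_{i,n}, F_{j,m}$, and the twisted derived Hall algebra $\tDHa(\DEF(K_{m,n}, A))$ becomes the twisted derived Hall algebra of $Q$, for which the composition subalgebra is accessible by representation-theoretic means.

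Step three, which is the main obstacle, is to invoke the Hernandez--Leclerc presentation of the composition subalgebra of $\tDHa(D^b(\Rep_\kk Q))$ and verify that every HL relation is already a consequence of the defining relations of $\NAlg(K_{m,n}, A)$. For pairs of simples whose supports lie in a common triangle embedded in $K_{m,n}$, the relation should follow from the triangle relations (R1)--(R2) of Cor. \ref{diskprescor} combined with the compatibility of the naive algebra under embeddings (Prop. \ref{nembthm}); for pairs lying ``far apart'' in the surface, the HL relation will be pure commutativity and must be matched with \eqref{g3:itm}. The delicate bookkeeping of $q$-exponents coming from the Euler form, of degrees determined by the balanced foliation, and of the incidence pattern of arcs versus that of simples in $Q$, is where the real work lies. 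Once this matching is complete, $\ga_A$ factors as $\NAlg(K_{m,n}, A) \twoheadrightarrow (\text{HL algebra}) \xto{\sim} \Alg(K_{m,n}, A)$, and since the composition is surjective and the second arrow is an isomorphism, $\ga_A$ itself is an isomorphism.
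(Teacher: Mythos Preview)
Your overall strategy is the paper's: pass through a derived equivalence $\Phi$ between $\DEF(K_{m,n})$ and $D^b(\Rep_\kk(V^{op}))$ for an acyclic quiver $V=V_{m,n}$, identify $\Alg(K,\bA)$ with the composition subalgebra $\DC(V^{op})$, take the Hernandez--Leclerc presentation of the latter (Prop.~\ref{kalgdef}), and verify each HL relation inside $\NAlg(K,\bA)$ via embedded triangles (Prop.~\ref{nembthm}) or far commutativity \eqref{g3:itm}. The paper implements this with the triangulated, fully formal arc system $\A_{m,n}$ of \S\ref{arcansec} rather than your two-disk decomposition; that choice makes $\End^*(G)$ the path algebra of $V_{m,n}$ on the nose, with no higher products and no relations.

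There is one genuine gap. In Step~2 you assert that the simple modules of $D^b(\Rep_\kk Q)$ correspond bijectively to the arcs $E_i, F_j$ of the arc system. This fails at the sink vertex: the simple $z_T$ is not $\Phi_*$ of any arc in $\bA$, but rather of a new arc $N$ obtained by a Pachner move (Def.~\ref{def:newarcN}, Lem.~\ref{phimaplemma}). Dually, the arcs $T$, $E_m$, $F_n$, $P_i$, $Q_j$ in $\bA$ are not sent to simples but to iterated $q$-brackets of simples. Because the generating sets of $\NAlg(K,\bA)$ and of $\DC(V^{op})$ therefore do not coincide, your concluding sentence is logically inverted: checking that the HL relations hold in $\NAlg$ produces a map $\varphi : \DC(V^{op}) \to \NAlg(K,\bA)$, not a factoring of $\ga_{\bA}$ through $\DC(V^{op})$. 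To close the argument the paper sets $\psi := \Phi_* \circ \ga_{\bA}$, computes $\psi$ explicitly on every arc in $\bA$ (list~(2) of Lem.~\ref{phimaplemma}), and then verifies $\varphi\circ\psi = 1_{\NAlg}$ generator by generator (Step~\#3 of Thm.~\ref{naiveisothm}), using disk relations from the triangles $D$, $D'$ containing $N$ (Lem.~\ref{ddplemma}). That verification, which your outline does not anticipate, is where injectivity of $\ga_{\bA}$ is actually established.
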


In \S\ref{nannulisec}, Thm. \ref{naiveisothm} shows that this conjecture holds for annuli $K_{m, n}$ when $m,n \geq 2$.

\subsection{Naturality and embedding properties}\label{arcmovesec}

The remainder of this section contains a few structural statements which
support the conjecture and are useful for proving relations in the naive algebra. 
The first goal is to show that if $\ga_A$ is an
isomorphism for some arc system $A$ then $\ga_{A'}$ exists and is also an isomorphism
for any other arc system $A'$ which has enough marked intervals and is related to $A$ by a Pachner move.
This is accomplished by studying how
Def. \ref{naivedef} transforms under Pachner moves. After this
Prop. \ref{nembthm} shows that the embedding maps of Thm. \ref{embthm} commute
with the comparison maps of Thm. \ref{prop:naivemaps}.

The theorem below shows that the naive algebras which differ by some Pachner
moves are naturally isomorphic and that this isomorphism commutes with the comparison maps $\ga$.

\begin{thm}\label{naiveeqvthm}
 % The naive algebras associated to any two full arc systems $A$ and $A'$ are
  %isomorphic
Suppose that $(S,M)$ is a marked surface, $A$ and $A'$ are two full arc systems such that $A' = A\backslash \{r\}$ for some internal arc $r\in A$ and both $(S,M,A)$ and $(S,M,A')$ have enough marked intervals. Then  there is an isomorphism
$$\NAlg(S,A) \cong \NAlg(S,A')$$
which makes the diagram below commute.
\begin{center}
\begin{tikzpicture}[scale=10, node distance=2cm]
\node (A) {$\NAlg(S,A)$};
\node (B) [right of=A] {$\NAlg(S,A')$};
\draw[->] (A) to node {$\sim$} (B);
\node (Ab) [below of=A] {$\Alg(S,A)$};
\node (Bb) [below of=B] {$\Alg(S,A')$};
\draw[->] (Ab) to node {$\sim$} (Bb);
\draw[->] (A) to node [swap] {$\ga_A$} (Ab);
\draw[->] (B) to node {$\ga_{A'}$} (Bb);
\end{tikzpicture} 
\end{center}
\end{thm}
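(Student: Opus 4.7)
Write $A' = A \setminus \{r\}$. Since both $A$ and $A'$ are full arc systems cutting $S$ into disks, the internal arc $r$ must lie on the shared boundary of two distinct disks $D^2_k, D^2_{k'}$ in the $A$-decomposition; removing $r$ merges them into a single disk $D := D^2_k \cup_r D^2_{k'}$ with minimal arc system $\A := \A_{m_k}\sqcup_{i,j}\A_{m_{k'}}$, while all other disks of the two decompositions agree.

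The plan is to construct the top isomorphism by comparing presentations. By the Gluing Theorem~\ref{gluethm},
\[
\Alg(D, \A) \;\cong\; \Alg(D^2_k, \A_{m_k}) \aprod_{i,j} \Alg(D^2_{k'}, \A_{m_{k'}}),
\]
where the right side is the free product modulo the \eqref{g1:itm} and \eqref{g3:itm} relations produced by gluing $D^2_k$ and $D^2_{k'}$ along $r$. Substituting this into Def.~\ref{naivedef} for $\NAlg(S,A')$ will rewrite it as the free product of the algebras $\Alg(D^2_\ell,\A_{m_\ell})$ over \emph{all} disks of the $A$-decomposition, modulo two families of \eqref{g1:itm} and \eqref{g3:itm} relations: those from pairs of disks sharing an arc in $A'$, together with those from the newly glued pair $(D^2_k, D^2_{k'})$. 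The crucial observation will be that the \eqref{g3:itm} condition depends only on whether arc endpoints share a marked interval of $(S,M)$---data intrinsic to the surface---so the combined relations are precisely those defining $\NAlg(S,A)$.

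For the bottom map, I would invoke the fact that the split-closed derived Fukaya categories $D^\pi\F(S,A)$ and $D^\pi\F(S,A')$ are quasi-equivalent: applying Rem.~\ref{diskrmk} to the merged disk $D$ writes $r$ as a twisted complex in the $\A$-arcs, so both composition subalgebras sit inside the common derived Hall algebra as the same subalgebra. To finish, I would check commutativity of the diagram on the disk-algebra generators: both $\ga_A$ and $\ga_{A'}$ send each arc to its class in the Hall algebra, and the top isomorphism matches $r \in \Alg(D,\A)$ with the twisted-complex expression produced by the gluing isomorphism, which is exactly $\ga_A(r)$.

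The hard part will be the careful bookkeeping of the \eqref{g3:itm} relations: verifying that no relation is introduced or lost when the two presentations are compared. This reduces to the intrinsic nature of the \eqref{g3:itm} condition---that it is governed by marked intervals of the surface, independent of any disk decomposition---but turning this observation into a clean bijection between relations requires some care, especially when several disks share marked intervals with both $D^2_k$ and $D^2_{k'}$.
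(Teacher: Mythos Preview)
Your approach is essentially the paper's: apply the Gluing Theorem to the merged disk $D$, then argue that the remaining \eqref{g1:itm} and \eqref{g3:itm} relations in the two naive presentations coincide because they depend only on the marked-interval structure of $(S,M)$ rather than on the chosen disk decomposition. The paper carries this out via an explicit diagram of free products and quotient maps, checking that the gluing isomorphism on the central factor lifts through both quotients, and obtains the bottom isomorphism from the Embedding Theorem~\ref{embthm} applied to the inclusion $A'\subset A$; your sketch hits the same beats and correctly flags the \eqref{g3:itm} bookkeeping as the place requiring care---the paper's justification there (``parameterized in the same way by the same arcs'') is at the same level of detail as yours.
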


\begin{proof}

First some notation. Since the arc systems $A$ and $A'$ differ by a single arc $r\in A$,
there is a unique disk $(D_r,\A_{m_r})$ in the decomposition
$S\backslash A' = \sqcup_{i=1}^N (D_i, \A_{m_i})$ containing the arc $r$, so that
$$S\backslash A = ((D_{r_1}, \A_{r_1}) \sqcup_{r,r} (D_{r_2}, \A_{r_2})) \sqcup_{i\ne r} (D_i, \A_{m_i}).$$

For the bottom row of the commutative diagram, the inclusion $A'\subset A$ determines a monomorphism $b : \Alg(S,A') \to \Alg(S,A)$ by Thm. \ref{embthm}. This map is uniquely determined by equivariance under suspension and the assignments,
\begin{equation}\label{bbeqn}
 b(x) := x \conj{ for all } x\in A'
\end{equation}
on generating arcs. Now by assumption, the disk $(D_r, \A_{m_r})$ containing $r$ as an internal arc satisfies the embedding criteria of Thm. \ref{embthm}. So there are monomorphisms 
$$\Alg(D_r, \A_{m_r}) \hookrightarrow \Alg(S,A') \xto{b} \Alg(S,A).$$
The Gluing Theorem \ref{gluethm} implies that any internal arc $r$ in $D_r$ can be expressed in terms of suspensions of the boundary arcs $\A_r$, $r=R\in \Alg(D_r, \A_{m_r})$, see Rmk. \ref{diskrmk}. So the map $b$ is also onto because $b(R) = r$. Therefore, $b$ is an isomorphism.

The rest of the theorem is an algebraic version of the geometric argument above. There are mutually inverse isomorphisms
  \begin{equation}\label{abeqn}
    \a : \NAlg(S,A) \rightleftarrows \NAlg(S,A') : \b
    \end{equation}
which commute with the maps $\ga_A$ and $\ga_{A'}$ as shown above.

For a map $\b : \NAlg(S,A') \to \NAlg(S,A)$ to commute with
$\ga_A$ and $\ga_{A'}$ it must be defined on generators by the same assignments as Eqn. \eqref{bbeqn}.
\begin{equation}\label{bdefeqn}
  \b(x) := x \conj{ for all } x \in A'.
  \end{equation}
In the same way, the map $\a : \NAlg(S,A)\to \NAlg(S,A')$ is determined by the assignments that the inverse
 $a : \Alg(S,A) \to \Alg(S,A')$ of the map $b$ makes to generators. (This follows from the fact that any $M-1$ boundary arcs generate the Fukaya-Hall algebra of the disk with $M$ boundary arcs.)

The rest of the proof shows that the assignments of $\b$ and $\a$ on generators above determine isomorphisms between naive algebras in Eqn. \eqref{abeqn}.

When the number $N$ of disks in the decomposition of $S$ by $A'$ is one, it suffices to observe that this statement reduces to the gluing theorem \ref{gluethm} because by construction $\NAlg(S,A') = \Alg(D_r,A_{m_r})$ and  $\NAlg(S,A) = \Alg(D_{r_1}, \A_{r_1}) \aprod_{r,r} \Alg(D_{r_2}, \A_{r_2})$.

When there is more than one disk, the isomorphism from the gluing theorem is shown to extend along the construction of the naive algebras. In each case, $\NAlg(S,A)$ and $\NAlg(S,A')$ is a quotient of a free product of either side of the gluing isomorphism with disk algebras subject to the gluing relations from Def. \ref{naivedef}. Assembling these quotients of free products gives the diagram below.
\begin{equation*}\begin{tikzpicture}[scale=10, node distance=2.5cm]
\node (S) {$\NAlg(S,A)$};
\node (A) [right=3cm of S] {};

\node (A1) [above=1cm of A]{$\cdots\ast \Alg(D_{r-1},\A_{m_{r-1}})\big) \ast \big(\Alg(D_r, \A_r)\big) \ast \big(\Alg(D_{r+1}, \A_{m_{r+1}}) \ast \cdots$};

\node (B1) [below=1cm of A]{$\cdots\ast \Alg(D_{r-1},\A_{m_{r-1}})\big) \ast \big( \Alg(D_{r_1}, \A_{r_1}) \aprod_{r,r} \Alg(D_{r_2}, \A_{r_2})\big) \ast \big(\Alg(D_{r+1}, \A_{m_{r+1}}) \ast \cdots$};
\node (B) [below=1cm of A1] {};
\node (T) [right=3cm of B] {$\NAlg(S,A')$};

\draw[->] (B1) to node {$\pi$} (S);
\draw[->] (A1) to node {$\rho$} (T);

\draw[transform canvas={xshift=-0.5ex},->] (A1) to node[pos=.25,left] {$\tilde{\b}$} (B1);
\draw[transform canvas={xshift=0.5ex},->] (B1) to node[pos=.25,right] {$\tilde{\a}$} (A1);

\draw[transform canvas={yshift=+0.5ex},dashed,->] (S) to node[pos=.75,above] {$\a$} (T);
\draw[transform canvas={yshift=-0.5ex},dashed,->] (T) to node[pos=.75,below] {$\b$} (S);

\draw[->] (B1) to node [swap] {$\a'$} (T);
\draw[->] (A1) to node [swap] {$\b'$} (S);
\end{tikzpicture}
\end{equation*}
In this diagram, there are maps $\tilde{\a}$ and $\tilde{\b}$ which extend the assignments made by the gluing theorem in Eqn. \eqref{bdefeqn} by identity homomorphisms on the other components in each of the two free products. By construction, $\tilde{\a}\tilde{\b} = 1$ and $\tilde{\b}\tilde{\a} = 1$. The maps $\pi$ and $\rho$ are quotient maps, set $\a' := \rho\tilde{\a}$ and $\b' := \pi \tilde{\b}$. 

The maps $\a'$ and $\b'$ lift along $\pi$ and $\rho$ to maps $\a$ an $\b$ because they respect the (G1) and (G3) relations which determine the quotients. In more detail, they respect the (G1) relations because these relations state that two arcs in two disks are set to be equal when they are glued and the collection of such arcs is identical for the arc systems $A'$ and $A\backslash \{r\}$. The maps $\a'$ and $\b'$ respect the (G3) relations for the same reason; they are parameterized in the same way by the same arcs. It follows that there are unique maps $\a$ and $\b$ such that $\a\pi=\a'$ and $\b\rho = \b'$

Finally, these lifts are mutually inverse isomorphisms because $\a\pi = \a'$
implies that $\b\a\pi = \b\a'$. So $\b\a$ is the unique lift of $\b\a'$
along $\pi$. Now using the equations above,
$(\b\a)\pi = \b\a' = \b\rho \tilde{\a} = \b'\tilde{\a} = \pi \tilde{\b}\tilde{\a} = \pi
1 = 1\pi$,
so that $\b\a\pi = \b\a' = 1\pi$. In particular, $(\b\a)\pi = 1\pi$, so
uniqueness of lifts implies that $\b\a = 1$. By symmetry, $\a\b = 1$.
\end{proof}

The proposition below shows that the naive conjecture is functorial with
respect to the embedding theorem \ref{embthm}.

\begin{prop}\label{nembthm}
  Suppose that $i : (S', M', A') \hookrightarrow (S, M, A)$ is an inclusion of marked surfaces, each with enough marked intervals and $\pi_0(i\vert_{M'})$ is injective. Then there is an algebra homomorphism $i_*$ making the diagram below commute.
\begin{center}
\begin{tikzpicture}[scale=10, node distance=2.5cm]
\node (A) {$\NAlg(S',A')$};
\node (B) [right of=A] {$\NAlg(S,A)$};
\draw[->] (A) to node {$i_*$} (B);
\node (Ab) [below of=A] {$\Alg(S',A')$};
\node (Bb) [below of=B] {$\Alg(S,A)$};
\draw[right hook->] (Ab) to node {$j_*$} (Bb);
\draw[->>] (A) to node [swap] {$\ga_{A'}$} (Ab);
\draw[->>] (B) to node {$\ga_{A}$} (Bb);
\end{tikzpicture} 
\end{center}
Here $j_*$ is the embedding from Thm. \ref{embthm} and the maps $\ga_A$ and $\ga_{A'}$ are comparison maps. 
%In particular, if the naive conjecture holds for $(S',M')$ then $\Alg(S',A')$ embeds in $\NAlg(S,A)$.
  \end{prop}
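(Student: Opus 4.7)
The plan is to construct $i_*$ by prescribing its values on generators of $\NAlg(S', A')$ and then to verify that the three families of defining relations are preserved. Since the embedding $i$ sends arcs of $A'$ to arcs of $A$, I would set $i_*(a) := i(a)$ for each generating arc $a \in A'$ and extend equivariantly to suspensions. With this definition, commutativity of the square reduces to a direct evaluation on generators,
\[ \ga_A(i_*(a)) = i(a) = j_*(a) = j_*(\ga_{A'}(a)). \]

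The gluing identifications \eqref{g1:itm} are preserved immediately: two generators identified by \eqref{g1:itm} in $\NAlg(S', A')$ represent the same internal arc $a \in A'$ viewed in the two adjacent disks, and both copies are sent to $i(a) \in A$, which in $\NAlg(S, A)$ is identified across its adjacent disks by \eqref{g1:itm} there. The far-commutativity relations \eqref{g3:itm} are preserved using the injectivity of $\pi_0(i\vert_{M'})$: if $E, F \in A'$ have endpoints in distinct components of $M'$, then $i(E), i(F)$ must have endpoints in distinct components of $M$, since otherwise the two components of $M'$ containing the respective endpoints of $E$ and $F$ would both be sent into a common component of $M$, and injectivity would force them to coincide in $M'$, contradicting the hypothesis.

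The main obstacle is the verification of the internal disk relations, namely the relations coming from each $\Alg(D'_k, \A_{m'_k})$. The key geometric observation is that arcs of $A \setminus i(A')$ cannot cross arcs of $i(A')$, so any such arc meeting $i(S')$ is contained entirely in the interior of some $i(D'_k)$. Hence for each disk $D'_k$ of $S' \setminus A'$, the image $i(D'_k)$ is subdivided by the arcs of $A$ it contains into a union of disks of $S \setminus A$. Applying Rmk. \ref{diskrmk} together with the gluing theorem \ref{gluethm} to this subdivision, the subalgebra of $\NAlg(S, A)$ generated by the arcs of this decomposition is canonically isomorphic to $\Alg(i(D'_k), i(\A_{m'_k}))$. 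Because $i$ is an embedding of graded marked surfaces, the foliation data transported via Eqn. \eqref{gluefoleq} match, so this algebra is identified through $i$ with $\Alg(D'_k, \A_{m'_k})$. Every disk relation of $\Alg(D'_k, \A_{m'_k})$ is therefore satisfied among the images $\{ i(a) : a \in \A_{m'_k} \}$ inside $\NAlg(S, A)$, which completes the verification and gives the desired homomorphism $i_*$.
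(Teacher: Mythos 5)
Your definition of $i_*$ on generating arcs, the commutativity check, and the verification of the \eqref{g1:itm} and \eqref{g3:itm} relations (where you correctly isolate how injectivity of $\pi_0(i\vert_{M'})$ is used) are in substance the paper's argument: the paper simply writes $\NAlg(S,A)$ as a quotient of $\NAlg(S',A')\ast\Alg(D^2_1,\A_{m_1})\ast\cdots\ast\Alg(D^2_n,\A_{m_n})$, where the extra free factors are the disks of $S\backslash A$ not contained in $S'\backslash A'$, and sets $i_*:=rp$. Implicit there, and in every later use of the proposition, is that the inclusion is compatible with the disk decompositions: each disk $D'_k$ of $S'\backslash A'$ is a disk of $S\backslash A$ with the same boundary arcs, so its disk relations are literally among the defining relations of $\NAlg(S,A)$ and nothing further needs checking.

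The gap is in your handling of the remaining case, where $i(D'_k)$ is subdivided by arcs of $A\setminus i(A')$. You invoke Rmk.~\ref{diskrmk} and Thm.~\ref{gluethm} to conclude that the subalgebra of $\NAlg(S,A)$ generated by the arcs of the subdivision is canonically isomorphic to $\Alg(i(D'_k),i(\A_{m'_k}))$. Those results are statements about the composition subalgebras $\Alg$; they say nothing about subalgebras of the naive algebra $\NAlg(S,A)$, which is defined purely by generators and relations and is not known to agree with $\Alg(S,A)$ --- that is precisely Conjecture~\ref{naiveconj}. Your isomorphism claim is a local instance of the naive conjecture, both unjustified and stronger than needed. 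What you actually need is only a homomorphism $\Alg(D'_k,\A_{m'_k})\to\NAlg(S,A)$ sending boundary arcs to their images, i.e.\ you must check that the defining relations of the glued presentation of $\Alg(i(D'_k),\,\cdot\,)$ --- the sub-disk relations, \eqref{g1:itm}, and the far-commutativity relations \eqref{g3:itm} taken with respect to the marked intervals \emph{of the disk} $i(D'_k)$ --- hold in $\NAlg(S,A)$. The first two are defining relations of $\NAlg(S,A)$, but the third is not automatic: two arcs of the subdivision may fail to share a marked interval of $i(D'_k)$ while their endpoints lie on a common marked interval of $(S,M)$, in which case the required commutator is not among the defining relations of $\NAlg(S,A)$ and is not obviously a consequence of them. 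Ruling this configuration out is exactly where ``enough marked intervals'' for $(S',M',A')$ together with injectivity of $\pi_0(i\vert_{M'})$ must be used (distinct marked intervals of $D'_k$ land in distinct marked intervals of $M$), and this argument is absent from your write-up. Either restrict to decomposition-compatible inclusions, as the paper implicitly does, or supply this check in place of the appeal to Rmk.~\ref{diskrmk}.
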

\begin{proof}
	  The map $i_* : \NAlg(S', A')\rightarrow \NAlg(S,A)$ is obtained by writing $\NAlg(S,A)$ as a quotient $r$ 
  $$\NAlg(S',A') \xto{p} \NAlg(S',A') \ast \Alg(D^2_1, \A_{m_1})\ast \cdots\ast \Alg(D^2_n, \A_{m_n}) \xto{r} \NAlg(S,A)$$
  where $\{(D^2_i, \A_{m_i})\}_{i=1}^n$ are the disks in $S\backslash A$ which are not contained in $S'\backslash A'$. So $i_* := rp$. 
The diagram commutes because all of the maps in the commutative diagram act
by identity on generating arcs. 
%The last statement is obtained by $i_* \circ\ga_{A'}^{-1}$ since $\ga_{A'}$ is an isomorphism by assumption.
\end{proof}

\section{The Hall algebras of naive annuli}\label{nannulisec}

In this section the naive conjecture \ref{naiveconj} is proven for the
annuli $K_{m,n}$ with $m,n \geq 2$ marked intervals on each boundary
component. Section \ref{arcansec} introduces arc systems $\bar{\A}_{m,n}$
and $\A_{m,n}$ for each annulus $K_{m,n}$. The arc system $\A_{m,n}$ is
fully formal in the sense of Def. \ref{fullyformaldef} and the arc system
$\bar{\A}_{m,n}$ is full and has enough marked intervals in the sense of
Def. \ref{def:enough}. In \S\ref{naiverelsec} the presentation for the
algebra $\NAlg(K_{m,n}, \bar{\A}_{m,n})$ predicted by the naive conjecture
is written out in complete detail. The next two sections constitute the
technical heart of the paper. Section \ref{nanparamsec} constructs an
isomorphism between the composition subalgebra $\Alg(K_{m,n}, \A_{m,n})$
and the composition subalgebra associated to
a quiver $V_{m,n}$. Section \ref{naiveproofsec} uses this isomorphism, in
conjunction with Prop. \ref{nembthm} to show that the comparison map
$\ga_{\bA_{m,n}}$ is an isomorphism, proving the naive conjecture.

\subsection{Arc systems for annuli}\label{arcansec}

An annulus $S^1\times [0,1]$ with $m$ boundary arcs $\{E_1,\ldots, E_m\}$
on one boundary component and $n$ boundary arcs $\{F_1,\ldots,F_n\}$ on the other is denoted by $K_{m,n}$.
$$K_{m,n} := (S^1\times [0,1],M) \conj{ where } M := (S^1\times \{0, 1\}) \backslash (\{E_1,\ldots, E_m\} \sqcup \{F_1,\ldots, F_n\})$$
We require $m,n \geq 1$ so that  $K_{m,n}$ is finitary, see \cite[Prop. 4.10]{sillyman1}. The labels on the arcs $\{E_i\}_{i\in \ZZ/m}$ and
$\{F_i\}_{i\in \ZZ/n}$ are cyclically ordered according to the orientation
of the surface; so that $E_{i+1}$ follows $E_i$ and $F_{j+1}$ follows $F_j$ in the cyclic order induced by the orientation along the boundary. (This implies $\Hom(E_i, E_{i+1})=k$.)

There is a {\em standard arc decomposition} $\A_{m,n} :=\{S,T\}\sqcup \{P_1,\ldots,P_{m-1}\}\sqcup \{Q_1,\ldots, Q_{n-1}\}$  consisting of arcs which cut the annulus $K_{m,n}$ into disks.  
This arc system can be completed to a full arc system
\begin{equation}\label{fstdarceq}
  \bA := \A \sqcup \{E_1,\ldots,E_m\} \sqcup \{F_1,\ldots,F_n\}
  \end{equation}
by adding all of the missing boundary arcs to the standard arc system. 
For notational reasons, it will be convenient to set $P_m := T$, $Q_n := T$, $Q_0 := S$ and $Q_n := S$. See the figure below.

\begin{figure}[h]
\begin{overpic}[scale=1.4]%,grid,tics=10]
{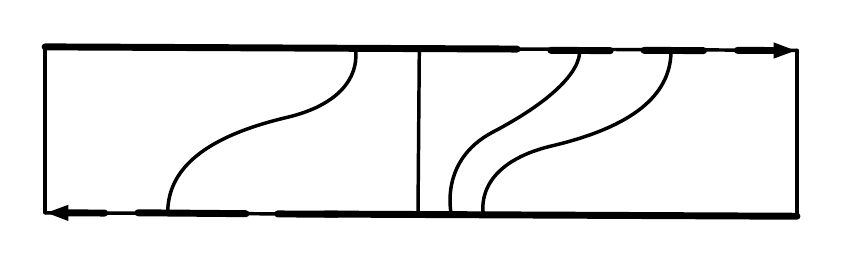}
\put(164,91){$S$}
\put(164,6){$S$}

\put(2,49){$T$}
\put(327,49){$T$}

\put(175,6){$P_1$}
\put(190,6){$P_2$}

 \put(42,6){$F_2$}
 \put(98,6){$F_1$}
 %% \put(42,6){$F_1$}
 %% \put(98,6){$F_2$}

\put(136,91){$Q_1$}

\put(210,91){$E_1$}
\put(248,91){$E_2$}
\put(285,91){$E_3$}
\end{overpic}
\caption{The annulus $K_{3,2}$ and the arc system $\bA_{3,2}$.}
\label{fig:standardarcs}
\end{figure}

\begin{notation}
Many of the objects in this section are parameterized by fixed subscripts
$m,n$.  These subscripts will be written often enough to avoid confusion,
but not so often as to avoid nuisance.
  \end{notation}

\begin{defn}\label{disklabelsdef}
Each triple of arcs on the righthand side below uniquely determines the disk in $K_{m,n}\backslash \bA$ 
which will be labelled by the lefthand side.
\begin{align*}
C_j &:= (D^2,\{ P_j, P_{j+1}, E_{j+1}\}) \conj{ for } 0 \leq j < m  \notag \\
D_i &:= (D^2,\{ Q_i, Q_{i+1}, F_{i+1}\}) \conj{ for } 0 \leq i < n \notag
\end{align*}
\end{defn}

\begin{defn}\label{stdfoldef}
If $\Ga_{m,n}$ is the ribbon graph dual to $K_{m,n}$ then the {\em standard
  foliation data} $\l : \Ga \to \ZZ$ is determined by setting $\l(h) =1$ for
each half-edge dual to a boundary edge and $\l(h) = 0$
otherwise. This foliation data is balanced and so it is constant along the core curve of the annulus.
\end{defn}

\subsection{Naive relations for the composition subalgebra $\Alg(K, \bA)$}\label{naiverelsec}
In this section presentations for the algebras $\NAlg(K_{m,n}, \bA)$ introduced by Def. \ref{naivedef} are fully articulated.

\begin{prop}\label{naiveannulusrels}
  The naive algebra $\NAlg(K_{m,n},\bA)$ of the annulus $K_{m,n}$ is the $\QQ(q)$-algebra generated by suspensions of arcs 
$\ZZ\bA$ in the full standard arc system 
$$\bA = \{S,T\}\sqcup \{P_1,\ldots,P_{m-1}\}\sqcup \{Q_1,\ldots, Q_{n-1}\} \sqcup \{E_1,\ldots,E_m\} \sqcup \{F_1,\ldots,F_n\},$$
$$\normaltext{ and } Q_0 := S,\quad Q_n := T,\quad P_0 := S,\quad P_m := T$$
see Eqn. \eqref{fstdarceq}. 
These generators are subject to the relations listed below.
\begin{enumerate}
\item The three boundary arcs on each of the disks, $C_j$ and $D_i$ from Def.  \ref{disklabelsdef},
in the decomposition $K_{m,n}\backslash \bA$ satisfy relations (R1) and (R2) from Cor. \ref{diskprescor}.

\item Commutativity relations hold between distant arcs. In particular, for each $\ell\in\ZZ$, there are
\begin{enumerate}
\item $S$ and $T$ arc relations
\begin{align}
[S, E_{k,\ell}]_1 = 0  \nt{ for } 1 < k < m & \conj{ and }  [T, E_{k,\ell}]_1 = 0  \nt{ for } 1 < k < m \label{st1}\\
 [S, F_{k,\ell}]_1 = 0  \nt{ for } 1 < k < n & \conj{ and }  [T, F_{k,\ell}]_1 = 0  \nt{ for } 1 < k < n \notag
\end{align}
\item  $P$ and $Q$ arc relations 
\begin{align*}
[P_i, E_{k,\ell}]_1 &= 0  \nt{ for } 1 \leq i < m \nt{ and } k\ne i, i+1\\
[P_i, F_{k,\ell}]_1 &= 0  \nt{ for } 1 \leq i < m \nt{ and } k\ne 1, n \\
[Q_i, F_{k,\ell}]_1 &= 0  \nt{ for } 1 \leq i < n \nt{ and } k\ne i, i+1 \\
[Q_i, E_{k,\ell}]_1 &= 0  \nt{ for } 1 \leq i < n \nt{ and } k\ne 1, m \\
[P_i, Q_{k,\ell}]_1 &= 0  \nt{ for } 1 \leq i < m \nt{ and } 1 \leq k < n 
\end{align*}
\item Boundary arc relations
\begin{align}
  [E_i, E_{j,\ell}]_1 &= 0  \nt{ for } 1\leq i \leq m \nt{ and } j\ne i,i+1 \in \ZZ/m \label{ef1}\\
  [F_i, F_{j,\ell}]_1 &= 0 \nt{ for } 1 \leq i \leq n \nt{ and } j\ne i,i+1 \in \ZZ/n \notag\\
[E_i, F_{j,\ell}]_1 &= 0  \nt{ for } 1 \leq i \leq m \nt{ and } 1 \leq j \leq n \label{ef3}
\end{align}
where $E_{k,\ell} := \s^\ell E_k$, $F_{k,\ell} := \s^\ell F_k$ and relations are taken to be closed under the action of suspension, see Def. \ref{naivedef}
\end{enumerate}
\end{enumerate}
\end{prop}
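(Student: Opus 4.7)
My plan is to compute $\NAlg(K_{m,n},\bA)$ directly from Definition \ref{naivedef}. The components of $K_{m,n}\setminus\bA$ are the triangular disks $C_0,\ldots,C_{m-1}$ and $D_0,\ldots,D_{n-1}$ of Definition \ref{disklabelsdef}, so the naive algebra is, by construction, the free product of the corresponding disk subalgebras modulo \eqref{g1:itm} and \eqref{g3:itm}. For each triangle I would substitute the presentation of Corollary \ref{diskprescor}, instantiated with the standard balanced foliation data of Definition \ref{stdfoldef}; this immediately yields clause (1).

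The \eqref{g1:itm} relations are automatic once each internal arc of $\bA$ is given a single name across the two adjacent disks. The substance of clause (2) is then the unpacking of \eqref{g3:itm}, which requires listing, for each marked interval of $K_{m,n}$, the arcs of $\bA$ having an endpoint there. I would walk around each triangle $C_j$ and $D_i$ recording its three corners. The generic top interval between $E_j$ and $E_{j+1}$ for $1\le j\le m-1$ turns out to be met only by the three arcs $\{E_j,E_{j+1},P_j\}$ of the single triangle $C_j$, and symmetrically on the bottom $F$-side. The two exceptional intervals $I_0$ between $E_m$ and $E_1$ and $J_0$ between $F_n$ and $F_1$ are met by many more arcs: the triangles $D_0,\ldots,D_{n-1}$ chain together so that all their ``top corners'' coincide, forcing every $Q_i$ together with $S$ and $T$ to end on $I_0$; dually every $P_i$ together with $S$ and $T$ ends on $J_0$. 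Converting these incidence lists into the rule that two arcs in different disks commute iff they share no marked interval produces exactly the families (2a)--(2c).

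The main obstacle will be the clustering phenomenon just described: explaining cleanly why, despite each triangle $D_i$ having only a local top corner, these corners are all forced to identify with the endpoints of $S$ and $T$ on $I_0$, so that the entire collection $S,T,Q_1,\ldots,Q_{n-1}$ accumulates on the single marked interval $I_0$. Articulating this carefully is what justifies both the presence of every commutator claimed in (2) and the absence of any further commutator among the generators, and it relies directly on the picture of the arc system drawn in Figure \ref{fig:standardarcs}.
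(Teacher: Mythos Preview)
Your proposal is correct and is exactly the approach the paper takes: its entire proof is the single sentence ``The proposition is proven by a careful study of Fig.~\ref{fig:standardarcs} and knowledge of Def.~\ref{naivedef}.'' You have simply spelled out what that careful study consists of---unpacking the free product of the triangle algebras $\Alg(C_j,\A_3)$, $\Alg(D_i,\A_3)$ via Cor.~\ref{diskprescor}, absorbing the \eqref{g1:itm} identifications into the naming conventions $P_0=Q_0=S$, $P_m=Q_n=T$, and then reading off the \eqref{g3:itm} commutators from the incidence pattern of arc-endpoints on marked intervals visible in the figure.
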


The proposition is proven by a careful study of Fig. \ref{fig:standardarcs}
and knowledge of Def. \ref{naivedef}.

\subsection{The parameterization $\Phi$}\label{nanparamsec}
The key result in this section is Thm. \ref{naivethm} which shows that the
generator $G$ associated to the arc system $\A_{m,n}$ determines an
isomorphism between the composition subalgebra %$\Alg(K_{m,n},\A_{m,n})$,
$\Alg(K_{m,n}, \bA_{m,n})$ and the composition subalgebra
$\DC(V_{m,n}^{\op})$ of the quiver $V^{op}_{m,n}$ introduced below. For the
most part, this is accomplished by showing that the isomorphism between
derived Hall algebras induced by the equivalence between $D^\pi\F(K, \A)$
and $D^b(\rmodule\End(G))$ commutes with inclusions of associated
composition subalgebras. The important stuff can be found in
Lem. \ref{phimaplemma}.  The rest follows from the identifications $\End(G)
\cong \rmodule kV$ and $\Alg(K, \A) \cong \Alg(K, \bA)$ which can be found
in Eqn. \eqref{rhoeqn} and the proof of Thm. \ref{naivethm} respectively.

The complement $K_{m,n}\backslash \A_{m,n}$ of the standard arc system is a disjoint union of disks $C_j$ and $D_i$ from Def. \ref{disklabelsdef}. The object $G_{m,n} := \oplus_{L\in \A_{m,n}} L$ generates the category $\DEF(K,\A)$ (see \cite[\S 3.4]{HKK}). 
Since $\bA$ is a full arc system and $\A$ is fully formal, the inclusion $\A\subset\bA$ induces an equivalence
\begin{equation}\label{vpeqn}
  \varphi : \DEF(K_{m,n}, \A) \xto{\sim} \DEF(K_{m,n}, \bar{\A}).
  \end{equation}
Since each of the disks in the collection $K_{m,n}\backslash \A_{m,n}$ contain exactly one boundary arc, an $E_i$ or an $F_j$, which is not in the arc system $\A$, the endomorphism algebra
\begin{equation}\label{kvmneqn}
\kk V_{m,n} := \End^*_{\DEF(K,\A)}(G_{m,n})
\end{equation}
is formal; this means that the higher $A_\infty$-operations vanish, $\mu_n=0$ for $n\geq 3$, see \cite[\S 3.4]{HKK}. By Def. \ref{fukcatdef}, $\kk V_{m,n}$ is the path algebra \cite[Def. 4.5]{Schiffler} of the quiver $V_{m,n}$ pictured below.

\begin{equation*}\begin{tikzpicture}[scale=10, node distance=2.5cm]
\node (S) {$P_0 = S = Q_0$};
\node (A) [right=1cm of S] {};

\node (A1) [above=1cm of A]{$P_1$};
\node (X) [left=3cm of A1]{$V_{m,n}:$};
\node (A2) [right=1cm of A1]{$P_2$};
\node (A3) [right=1cm of A2]{$\cdots$};
\node (A4) [right=1cm of A3]{$P_{m-1}$};

\node (B1) [below=1cm of A]{$Q_1$};
\node (B2) [right=1cm of B1]{$Q_2$};
\node (B3) [right=1cm of B2]{$\cdots$};
\node (B4) [right=1cm of B3]{$Q_{n-1}$};

\node (B) [below=1cm of A4] {};
\node (T) [right=1cm of B] {$P_m = T = Q_n$};

\draw[->] (S) to node {} (A1);
\draw[->] (S) to node {} (B1);

\draw[->] (A1) to node {} (A2);
\draw[->] (A2) to node {} (A3);
\draw[->] (A3) to node {} (A4);

\draw[->] (B1) to node {} (B2);
\draw[->] (B2) to node {} (B3);
\draw[->] (B3) to node {} (B4);

\draw[->] (B4) to node {} (T);
\draw[->] (A4) to node {} (T);
\end{tikzpicture}
\end{equation*}

Since $G_{m,n}$ generates the category $\DEF(K,\A)$, there is an equivalence of triangulated categories 
\begin{equation}\label{phieqn}
  \phi : \DEF(K, \A) \xto{\sim} D^b(\rmodule\kk V^{op})\conj{ given by } L \mapsto Hom^*(G,L)
  \end{equation}
from the split-closed Fukaya category Def. \ref{scfukcatdef} to the bounded derived category of finitely generated right $\kk V_{m,n}^{op}$-modules \cite[\S 7.6]{LH}.
Note that such a functor induces an isomorphism $\phi_*$ of associated derived Hall algebras, see \cite{XX, Toen} or \cite[Thm. 2.6]{sillyman1}.

The next proposition records what this functor does to arcs in the standard arc system.

\begin{prop}\label{peter1prop}
The functor  $\phi$ associates to each arc $L\in \A$ the right $kV^{op}$-module $\phi(L) = 1_L\kk V^{op}$ consisting of oriented paths in the quiver $V^{op}$ which begin at the vertex $L$.
\end{prop}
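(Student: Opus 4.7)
My plan is to unwind the definition of $\phi$ and directly identify the right $\kk V^{op}$-module $\Hom^*(G, L)$ with $1_L\, \kk V^{op}$. Since $G = \bigoplus_{L' \in \A} L'$, the Hom space splits as $\phi(L) = \bigoplus_{L' \in \A} \Hom^*(L', L)$. The key input is the formality of $\End^*(G)$ noted in the paragraph containing \eqref{kvmneqn}: because $\A$ is fully formal, the higher $\mu_n$ vanish on $\End^*(G)$, so this endomorphism algebra coincides with the ordinary path algebra $\kk V$ of the quiver $V = V_{m,n}$ displayed above, with vertices indexed by the arcs in $\A$ and arrows indexed by generating boundary paths between adjacent arcs in the decomposition $K \setminus \A$. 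Under this identification, $\Hom^*(L', L)$ is spanned by oriented paths in $V$ from $L'$ to $L$, i.e., by $1_L (\kk V) 1_{L'}$.

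Summing over $L'$ gives $\phi(L) = 1_L \cdot \kk V$ as a vector space. Since a path in $V$ ending at $L$ is tautologically the same datum as a path in $V^{op}$ beginning at $L$, this equals $1_L \cdot \kk V^{op}$ on the nose, and the trivial path at $L$ corresponds under this dictionary to the morphism $1_L \in \End(L) \subset \Hom(G, L)$ that generates the module. To finish, I would verify that the right module structures agree: the right $\kk V^{op}$-action on $\Hom(G, L)$ is induced by precomposition with endomorphisms of $G$, namely $f \cdot e := f \circ e$ for $e \in \End(G)$. Translated to paths in $V^{op}$ starting at $L$, precomposition with an arrow in $V$ corresponds to concatenating with a path continuing in $V^{op}$ from the far endpoint, which is precisely the natural right $\kk V^{op}$-action on $1_L \kk V^{op}$.

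The whole argument is essentially an instance of the standard observation that a representable functor out of an additive category with a generator $G$ sends summands of $G$ to the indecomposable projective right $\End(G)$-modules at the corresponding vertices. The only (mild) obstacle is keeping the $V$ versus $V^{op}$ bookkeeping and the left/right conventions straight; once the formality identification $\End^*(G) = \kk V$ is invoked and the basis of $\Hom^*(L',L)$ by boundary paths is set up correctly, the remaining verifications are routine.
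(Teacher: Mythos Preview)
Your proposal is correct and follows essentially the same approach as the paper: decompose $\phi(L)=\Hom^*(G,L)$ over the summands $L'\in\A$, identify each $\Hom^*(L',L)$ with the span of paths $L'\to L$ in $V$ (equivalently paths $L\to L'$ in $V^{\op}$), and then check that precomposition by $\End(G)=\kk V$ recovers the standard right $\kk V^{\op}$-module structure on $1_L\kk V^{\op}$. The paper's proof is slightly terser but makes the same identifications in the same order.
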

\begin{proof}
For each arc $L\in \A$, the value of $\phi$ can be computed since
\begin{align*}
\phi(L) &= Hom_{\DEF(K,\A)}(G,L)\\
&= Hom_{\DEF(K,\A)}(\opp_{L'\in \A} L' , L)\\
&= \opp_{L'\in \A} Hom_{\DEF(K,\A)}(L' , L)\\
&= \opp_{L'\in \A} Hom_{\F(K,\A)}(L' , L).
\end{align*}
So $\phi(L)$ has a basis given by the paths $\b : L' \to L$ in $V$, or equivalently, paths $\b : L \to L'$ in $V^{op}$. 
The module structure is determined by composition, a path $\a : X\to Y$ in $V^{op}$ acts on a path $\b$ in $V^{op}$ by
$$\b\cdot\a = \left\{\begin{array}{ll}
\b\a & \normaltext{ if } Y = L' \\ 
0 & \normaltext{ if } Y \ne L'. \\
\end{array}\right.$$
So as a right module over the path algebra $\kk V^{op}$, $\Phi(L)$ is the right projective module $1_L \kk V^{op}$ where $1_L$ is the idempotent associated to the vertex $L$. 
\end{proof}

There is an equivalence of abelian categories $\rho : \rmodule\kk V^{op} \xto{\sim} Rep_\kk(V^{op})$,
see \cite[Thm. 5.4]{Schiffler}. This map induces an equivalence 
\begin{equation}\label{rhoeqn}
  \rho_* : D^b(\rmodule\kk V^{op}) \xto{\sim} D^b(Rep_\kk(V^{op}))
  \end{equation}
  of associated derived categories.  In the proposition above, $\rho_*$
  takes the right module $1_L \kk V$ to a functor $V^{op} \to \Vect_{\kk}$
  from the opposite of the quiver $V$ to the category of vector spaces. The
  value of this functor at a vertex $K$ in the quiver $V$ is
  $Hom_{V^{op}}(L,K)$. For notational reasons, this paper will not distinguish between $1_LkV^{op}$ and $\rho_*(1_L kV^{op})$.

Composing the functors in  Eqns. \eqref{phieqn} and  \eqref{rhoeqn} produces the main object of our study, $\Phi := \rho_*\circ \phi$, which is an equivalence of categories
\begin{equation}\label{parameqn}
\Phi : \DEF(K_{m,n}) \xto{\sim} D^b(\Rep_\kk(V^{op}_{m,n})).
\end{equation}

The example below is included to illustrate which quiver modules are associated to arcs in $\A$ by the functor $\Phi$.

\begin{example}\label{k22ex}
Suppose that the surface is $K_{2,2}$. Then the equivalence
$\Phi : \DEF(K_{2,2}, \A) \xto{\sim} D^b(Rep_{\kk}(V_{2,2}^{op}))$
 associates to each arc $L\in \A=\{S,T,Q_1,P_1\}$, a functor $\Phi(L) : V^{op} \to Vect_{\kk}$ in $Rep_{\kk}(V^{op})$. By Prop. \ref{peter1prop} above, the value of $\Phi(L)$ at a vertex $K$ is the $\kk$-vector space $Hom_{V^{op}}(L,K)$. This is the set of $\kk$-linear combinations of oriented paths in the quiver $V^{op}_{2,2}$ which begin at the vertex $L$ and end at the vertex $K$. 
For $L\in \A$, these modules are pictured below.
$$\begin{array}{cccc}
\begin{tikzpicture}[scale=10, node distance=2.5cm]
\node (S) {$\kk$};
\node (A) [right=.5cm of S] {};
\node (A1) [above=.5cm of A]{$0$};
\node (X) [left=.5cm of A1]{$\Phi(S):$};
\node (B1) [below=.5cm of A]{$0$};
\node (T) [right=.5cm of A] {$0$};
\draw[<-] (S) to node {} (A1);
\draw[<-] (S) to node {} (B1);
\draw[<-] (A1) to node {} (T);
\draw[<-] (B1) to node {} (T);
\end{tikzpicture} &
\begin{tikzpicture}[scale=10, node distance=2.5cm]
\node (S) {$\kk$};
\node (A) [right=.5cm of S] {};
\node (A1) [above=.5cm of A]{$\kk$};
\node (X) [left=.5cm of A1]{$\Phi(P_1):$};
\node (B1) [below=.5cm of A]{$0$};
\node (T) [right=.5cm of A] {$0$};
\draw[<-] (S) to node {} (A1);
\draw[<-] (S) to node {} (B1);
\draw[<-] (A1) to node {} (T);
\draw[<-] (B1) to node {} (T);
\end{tikzpicture} &
\begin{tikzpicture}[scale=10, node distance=2.5cm]
\node (S) {$\kk$};
\node (A) [right=.5cm of S] {};
\node (A1) [above=.5cm of A]{$0$};
\node (X) [left=.5cm of A1]{$\Phi(Q_1):$};
\node (B1) [below=.5cm of A]{$\kk$};
\node (T) [right=.5cm of A] {$0$};
\draw[<-] (S) to node {} (A1);
\draw[<-] (S) to node {} (B1);
\draw[<-] (A1) to node {} (T);
\draw[<-] (B1) to node {} (T);
\end{tikzpicture} &
\begin{tikzpicture}[scale=10, node distance=2.5cm]
\node (S) {$\kk^2$};
\node (A) [right=.5cm of S] {};
\node (A1) [above=.5cm of A]{$\kk$};
\node (X) [left=.5cm of A1]{$\Phi(T):$};
\node (B1) [below=.5cm of A]{$\kk$};
\node (T) [right=.5cm of A] {$\kk$};
\draw[<-] (S) to node {} (A1);
\draw[<-] (S) to node {} (B1);
\draw[<-] (A1) to node {} (T);
\draw[<-] (B1) to node {} (T);
\end{tikzpicture}
\end{array}$$
\end{example}

The lemma below is standard triangulated category stuff (for a proof see \cite[Prop. 1.1.20]{Neeman}).

\begin{lemma}\label{trilem}
Suppose the diagram below is a commutative square 
\begin{center}
\begin{tikzpicture}[scale=10, node distance=1.5cm]
\node (A) {$A$};
\node (B) [right of=A] {$B$};
\draw[->] (A) to node {$\a$} (B);
\node (Ab) [below of=A] {$C$};
\node (Bb) [below of=B] {$D$};
\draw[->] (Ab) to node {$\b$} (Bb);
\draw[->] (A) to node [swap] {$\psi$} (Ab);
\draw[->] (B) to node {$\varphi$} (Bb);
\end{tikzpicture} 
\end{center}
in a triangulated category $\aT$. If the vertical maps $\psi$ and $\varphi$ are isomorphisms then there is an isomorphism $C(\a) \cong C(\b)$ between the cones of the horizontal maps.
  \end{lemma}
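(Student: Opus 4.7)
The plan is to invoke the standard axioms of triangulated categories. First I would complete the horizontal maps $\alpha\colon A\to B$ and $\beta\colon C\to D$ to distinguished triangles
\begin{equation*}
A \xrightarrow{\alpha} B \to C(\alpha) \to A[1]
\qquad\text{and}\qquad
C \xrightarrow{\beta} D \to C(\beta) \to C[1]
\end{equation*}
using axiom (TR1). The commutativity of the given square, $\varphi\alpha = \beta\psi$, is precisely the hypothesis needed to apply axiom (TR3): it guarantees that the pair $(\psi,\varphi)$ extends to a morphism of distinguished triangles, producing a third map $h\colon C(\alpha)\to C(\beta)$ together with the further commutative squares involving the connecting morphisms and the shift $\psi[1]$.

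The second step is to upgrade $h$ from a mere morphism to an isomorphism. Since $\psi$ and $\varphi$ are given to be isomorphisms, so is $\psi[1]$, and I would conclude by appealing to the triangulated five-lemma: in any morphism of distinguished triangles, if two out of the three vertical maps are isomorphisms then so is the third. This gives $h\colon C(\alpha)\xrightarrow{\sim} C(\beta)$, which is the desired conclusion.

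I do not anticipate any real obstacle here; the statement is a routine consequence of (TR1), (TR3), and the triangulated five-lemma, and is stated (with reference to \cite[Prop.~1.1.20]{Neeman}) as background. The only subtle point worth remarking is that the map $h$ produced by (TR3) is not canonical, but \emph{any} choice is an isomorphism by the five-lemma argument, which is all that the lemma asserts.
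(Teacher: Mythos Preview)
Your proposal is correct and is exactly the standard argument the paper has in mind: the paper does not give its own proof but simply cites \cite[Prop.~1.1.20]{Neeman}, which is precisely the (TR1)/(TR3)/five-lemma argument you outline. There is nothing to add.
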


\begin{defn}\label{def:newarcN}
The {\em special arc $N$} is pictured as a dashed line below. This arc is contained in the boundary of two embedded disks $D$ and $D'$. 
\begin{center}\label{fig:newarcN}
\begin{overpic}[scale=1.4]%,grid,tics=10]
{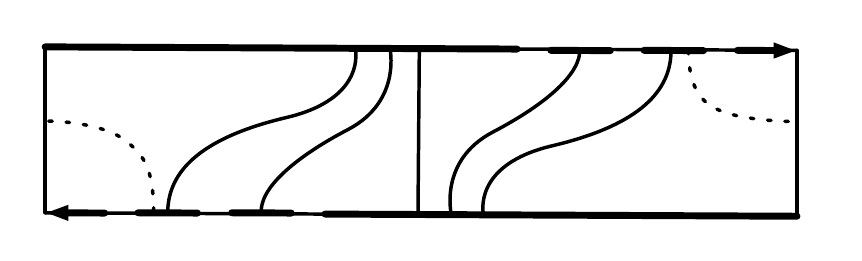}
\put(325,53){$N$}
\put(4,53){$N$}
\put(65,65){$D$}
\put(265,30){$D'$}
\put(284,91.5){$E_m$}
\put(135,91.5){$Q_{n-1}$}
\put(42.5,6){$F_n$}
\put(188,6){$P_{m-1}$}
\end{overpic}
\end{center}
\end{defn}

Functoriality implies that the equivalence $\Phi$ from Eqn. \eqref{parameqn} induces an isomorphism 
$$\Phi_* : \tDHa(\DEF(K_{m,n})) \to \tDHa (D^b(\Rep_\kk(V^{op}_{m,n}))).$$
Thm. \ref{naivethm} will use Lem. \ref{phimaplemma} below to show that $\Phi_*$  induces an isomorphism between composition subalgebras. 

The lemma below computes the value of $\Phi_*$ on generating arcs $L\in \A$. Recall that $z_L$ is the 1-dimensional simple module associated to the vertex $L\in V^{op}$.

\begin{lemma}\label{phimaplemma}
\begin{enumerate}
\item The isomorphism $\Phi_*$ maps the following arcs onto the 1-dimensional simple modules of the quiver $V^{op}_{m,n}$
\begin{align}
\Phi_*(S) &= z_S  & \notag\\
\Phi_*(E_i) &= z_{P_i} & \normaltext{ for } 1 \leq i < m \label{pieq}\\
\Phi_*(F_j) &= z_{Q_j} & \normaltext{ for } 1 \leq j < n \label{qjeq}\\
\Phi_*(N) &= z_T & \label{tsimpeq}
\end{align}
\item The value of $\Phi_*$ on any other arc $X\in \bA$ has an expression in terms of simple modules
\begin{align}
\Phi_*(P_i) &= [\Phi_*(P_{i-1}), z_{P_i}]_q & \normaltext{ for } 0 < i < m \label{phi1eq}\\
\Phi_*(Q_j) &= [\Phi_*(Q_{j-1}), z_{Q_j}]_q & \normaltext{ for } 0 < j < n \notag\\
\Phi_*(E_m) &= [\Phi_*(Q_{n-1}), z_T]_q & \label{phi2eq}\\
\Phi_*(F_n) &= [\Phi_*(P_{m-1}), z_T]_q & \notag\\
\Phi_*(T) &= [\Phi_*(P_{m-1}),[\Phi_*(Q_{n-1}), z_T]_q ]_q & \label{phi3eq} \\
        &= [\Phi_*(Q_{n-1}),[\Phi_*(P_{n-1}), z_T]_q ]_q. &\notag 
\end{align}
\end{enumerate}
  \end{lemma}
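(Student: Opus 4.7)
The plan is to establish Part (1) by identifying each of the arcs $S$, $E_i$, $F_j$, $N$ with a specific simple $V^{op}_{m,n}$-module, and then to deduce Part (2) from disk triangles in the Fukaya category by translation into Hall-algebra relations.

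For Part (1), I would begin with $\Phi(S)$. Since $S = P_0 = Q_0$ is a source of the quiver $V_{m,n}$, it is a sink of $V^{op}_{m,n}$, so the only oriented path in $V^{op}$ starting at $S$ is the trivial one. By Prop.~\ref{peter1prop}, $\Phi(S) = 1_S kV^{op}$ is therefore one-dimensional and equals $z_S$. The arcs $E_i$ (for $1\le i<m$), $F_j$ (for $1\le j<n$), and $N$ are not in $\A$, so to compute $\Phi$ on them I would use the fully formal property of $\A$ (Rmk.~\ref{ffrmk}): each such arc is the unique missing boundary arc of a disk in $K_{m,n}\setminus\A$, hence is recoverable as a twisted complex of the other boundary arcs. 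For $E_i$, this disk is $C_{i-1}=(D^2,\{P_{i-1},P_i,E_i\})$ of Def.~\ref{disklabelsdef}. With the standard foliation data (Def.~\ref{stdfoldef}), the $\mu_3$ relation in $C_{i-1}$ (cf.\ Cor.~\ref{diskprescor} and Rmk.~\ref{keyrelrmk}) lifts to a distinguished triangle of the form
\[
E_i \longrightarrow P_i \longrightarrow P_{i-1} \longrightarrow E_i[1]
\]
in $D^\pi\F(K_{m,n},\A)$. Applying $\Phi$ and using Prop.~\ref{peter1prop} to identify $\Phi(P_{i-1})$ and $\Phi(P_i)$ as the indecomposable projectives $1_{P_{i-1}}kV^{op}$ and $1_{P_i}kV^{op}$, one obtains a triangle in $D^b(\Rep_k V^{op})$ whose nontrivial map is the natural surjection of projectives, and whose cone is the simple $z_{P_i}$ (of dimension one at $P_i$ and zero elsewhere). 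Lem.~\ref{trilem} then forces $\Phi(E_i)\cong z_{P_i}$. The argument for $F_j$ is identical, using $D_{j-1}$. For $N$, I would run the same argument in a modified arc system in which $N$ is the missing boundary arc of the two adjacent disks $D, D'$ of Def.~\ref{def:newarcN}, whose triangles combine (after applying Thm.~\ref{naiveeqvthm} and Prop.~\ref{nembthm} to move between arc systems) to force $\Phi(N)\cong z_T$.

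For Part (2), the formulas are the Hall-algebraic shadow of those same disk triangles. In the twisted derived Hall algebra, a distinguished triangle $A\to B\to C\to A[1]$ translates, via To\"en's structure constants of Eqn.~\eqref{toeneq} and the Euler-form twist, into a $q$-commutator identity relating the classes of $A$, $B$, $C$. Applied to the triangle from $C_{i-1}$, this yields $\Phi_*(P_i)=[\Phi_*(P_{i-1}),z_{P_i}]_q$; iterating through $C_0,\dots,C_{i-1}$ gives all of Eqn.~\eqref{phi1eq}, and the symmetric argument using the disks $D_{j-1}$ gives the $Q_j$ formulas. The disks $C_{m-1}=(D^2,\{P_{m-1},T,E_m\})$ and $D_{n-1}=(D^2,\{Q_{n-1},T,F_n\})$ then produce Eqn.~\eqref{phi2eq}, and iterating both of them around $T$ produces Eqn.~\eqref{phi3eq}, the two equivalent bracket expressions corresponding to the two ways of resolving the relevant triangle of triangles.

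The main obstacle will be the delicate bookkeeping of foliation data and degree shifts. Because the standard foliation on $K_{m,n}$ is balanced (Cor.~\ref{balancedprop}), the shifts in the disk triangles are tightly constrained, but one must still verify that the orientation of each triangle, and thereby the choice of $q$-commutator versus its reverse, matches the cyclic order of boundary arcs around each disk. The verification for $N$ is the most delicate step, since $N$ lies in neither $\A$ nor $\bA$ and must be written as a twisted complex across two disks whose gluing transforms the foliation data according to Eqn.~\eqref{gluefoleq}; controlling both triangles simultaneously is where the heaviest computation will live.
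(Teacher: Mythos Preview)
Your overall strategy matches the paper's: identify $\Phi(S)$ directly as the simple projective, then use disk triangles together with Prop.~\ref{peter1prop} and Lem.~\ref{trilem} to pin down $\Phi(E_i)$, $\Phi(F_j)$, and $\Phi(N)$; finally read off Part (2) from the same triangles via Rmk.~\ref{keyrelrmk}. Two points deserve correction.

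\textbf{The computation of $\Phi(N)$.} Your plan to ``run the same argument in a modified arc system'' and invoke Thm.~\ref{naiveeqvthm} and Prop.~\ref{nembthm} is the wrong tool. Those results concern the naive algebra $\NAlg$ and the comparison map $\gamma_A$; they say nothing about the functor $\Phi$, which is defined once and for all via the fixed generator $G=\bigoplus_{L\in\A}L$. Changing the arc system changes $G$ and hence $\Phi$. The paper stays in $\A$ and proceeds in two steps. First, the short exact sequence
\[
0\;\longrightarrow\;1_{P_{m-1}}kV^{op}\;\longrightarrow\;1_TkV^{op}\;\longrightarrow\;X\;\longrightarrow\;0
\]
identifies $\Phi(E_m)$ with the explicit quotient module $X=1_TkV^{op}/1_{P_{m-1}}kV^{op}$ (via the disk $C_{m-1}$ and Lem.~\ref{trilem}). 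Second, the boundary path $\tau:Q_{n-1}\to E_m$ gives
\[
0\;\longrightarrow\;1_{Q_{n-1}}kV^{op}\;\xrightarrow{\ \Phi(\tau)\ }\;X\;\longrightarrow\;z_T\;\longrightarrow\;0,
\]
and since the disk $D=(D^2,\{N,E_m,Q_{n-1}\})$ of Def.~\ref{def:newarcN} embeds, $C(\tau)\cong N$, whence $\Phi(N)\cong z_T$. No arc-system change is needed; the intermediate module $X$ is the bridge.

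\textbf{Which disks give which relations.} In Part (2) you attribute Eqn.~\eqref{phi2eq} to the disks $C_{m-1}$ and $D_{n-1}$. In fact \eqref{phi2eq} comes from the disks $D$ and $D'$ of Def.~\ref{def:newarcN} (equivalently, from the second short exact sequence above), since it relates $E_m$, $Q_{n-1}$, and $z_T=\Phi_*(N)$. The disk $C_{m-1}=(D^2,\{P_{m-1},T,E_m\})$ instead yields $\Phi_*(T)=[\Phi_*(P_{m-1}),\Phi_*(E_m)]_q$, which combined with \eqref{phi2eq} gives \eqref{phi3eq}. Also, the map $1_{P_{i-1}}kV^{op}\to 1_{P_i}kV^{op}$ is an inclusion of projectives, not a surjection; the direction of your triangle for $E_i$ is reversed accordingly.
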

\begin{proof}
The proof has four parts. The first three parts establish the equations in the list (1)
above.  The first part concerns the simple module $z_S$ at the vertex
$S$. The second part shows that the simple modules at vertices $P_i$ and
$Q_j$ correspond to the arcs $E_i$ and $F_j$ respectively. Since the
argument in the second part is repeated several times, more details are
given there.  The third case identifies the simple module at the vertex $T$
as the image of the arc $N$ from Def. \ref{def:newarcN}.  The last part uses the distinguished triangles
from earlier computations to establish the equations in list (2) above.

{\it Part 1.} The arc $S$ corresponds to the simple module $z_S$ because the projective $\Phi(S) = 1_SkV^{op}$ is 1-dimensional, see Ex. \ref{k22ex} above.

{\it Part 2.}  In the category $Rep_{\kk}(V^{op})$, there is a short exact sequence
\begin{equation}\label{seseqn}
0 \to 1_{P_{i-1}}kV^{op}\to 1_{P_i}kV^{op} \to z_{P_i} \to 0
\end{equation}
for $1\leq i < m$, see \cite[Rmk. 1.3]{Schiffler}. These short exact sequences determine distinguished triangles of the form
\begin{equation}\label{disreqpeqn}
\cdots \to 1_{P_{i-1}}kV^{op}\to 1_{P_i}kV^{op} \to z_{P_i} \to \cdots
\end{equation}
in the derived category $D^b(Rep_{\kk}(V^{op}))$. By Prop. \ref{peter1prop} above, there are isomorphisms $1_{P_i}kV^{op} \xto{\sim} \Phi(P_i)$. Under this identification, the map $1_{P_{i-1}}kV^{op}\to 1_{P_i}kV^{op}$ in Eqn. \eqref{disreqpeqn} corresponds to the map $\Phi(p_{i-1})$ where $p_{i-1} : P_{i-1} \to P_i$ is the boundary arc from $P_{i-1}$ to $P_i$ in the annulus $K_{m,n}$. So the diagram below commutes.
\begin{center}
\begin{tikzpicture}[scale=10, node distance=2cm]
\node (B) {$1_{P_{i-1}} kV^{op}$};
\node (C) [right=2cm of B] {$1_{P_i}kV^{op}$};
\node (D) [right=2cm of C] {$z_{P_i}$};
\draw[->] (B) to node {} (C);
\draw[->] (C) to node {} (D);

\node (Bb) [below of=B] {$\Phi(P_{i-1})$};
\node (Cb) [below of=C] {$\Phi(P_i)$};
\node (Db) [below of=D] {$C(\Phi(p_{i-1}))$};
\draw[->] (Bb) to node {} (Cb);
\draw[->] (Cb) to node {} (Db);

\draw[->] (B) to node {} (Bb);
\draw[->] (C) to node {} (Cb);
\draw[->, dashed] (D) to node {$a$} (Db);
\end{tikzpicture} 
\end{center}
By Lem. \ref{trilem}, there is an isomorphism $a : z_{P_i} \xto{\sim} C(\Phi(p_{i-1}))$. Since $\Phi$ is a map of triangulated categories, there is an isomorphism $b : C(\Phi(p_{i-1})) \xto{\sim} \Phi(C(p_{i-1}))$.

On the other hand, the disk $C_{i-1} \subset K$ determined by the arcs
$\{P_{i-1}, P_i, E_i\}$ 
satisfies the embedding criteria of Thm. \ref{embthm},
so the associated functor $D^\pi\F(C_{i-1}) \hookrightarrow D^\pi\F(K_{m,n})$ is full and faithful. Since it maps the distinguished triangle
$$  \cdots \to E_{i,1} \to P_{i-1,0} \xto{p_{i-1}} P_{i,0} \to E_{i,0} \to P_{i-1,-1}\to \cdots$$
to the distinguished triangle associated to the boundary path $p_{i-1}$, Lem. \ref{trilem} shows that there is an isomorphism $c : C(p_{i-1}) \xto{\sim} E_i$ in $D^\pi\F(K,\bA)$. Combining these isomorphisms shows that
$$z_{P_i} \cong_a C(\Phi(p_{i-1})) \cong_b \Phi(C(p_{i-1})) \cong_{\Phi(c)} \Phi(E_i).$$
So $\Phi_*(E_i) = z_{P_i}$ for $1\leq i < m$.  The proof of Eqn. \eqref{qjeq} follows from the same argument because the computation is symmetric for the arcs $P_i$ and $Q_j$.

{\it Part 3.} The proof that $\Phi_*(N) = z_T$ requires two short exact sequences.
Each short exact sequence gives a relation by an argument analogous to the one in Part 2. The two relations combine at the end.

First there is a short exact sequence
\begin{equation}\label{tauseq1}
0 \to 1_{P_{m-1}}kV^{op} \to 1_TkV^{op} \to X \to 0
\end{equation}
where $X := 1_TkV^{op}/1_{P_{m-1}}kV^{op}$ is the module $V^{op} \to \Vect_{\kk}$ in the category $Rep_{\kk}(V^{op})$ described by the diagram below.
\begin{center}
\begin{tikzpicture}[scale=10, node distance=2.5cm]
\node (S) {$\kk$};
\node (A) [right=.5cm of S] {};
\node (A1) [above=.5cm of A]{$0$};
\node(A2) [right=.5cm of A1]{$0$};
\node(A3) [right=.5cm of A2]{$\cdots$};
\node(A4) [right=.5cm of A3]{$0$};
\draw[->] (A2) to node {} (A1);
\draw[->] (A3) to node {} (A2);
\draw[->] (A4) to node {} (A3);

\node (X) [left=2cm of A1]{$X:$};
\node (B1) [below=.5cm of A]{$\kk$};

\node(B2) [right=.5cm of B1]{$\kk$};
\node(B3) [right=.5cm of B2]{$\cdots$};
\node(B4) [right=.5cm of B3]{$\kk$};
\draw[->] (B2) to node {} (B1);
\draw[->] (B3) to node {} (B2);
\draw[->] (B4) to node {} (B3);

\node (X) [above=.5cm of B4] {};
\node (T) [right=.5cm of X] {$\kk$};
\draw[<-] (S) to node {} (A1);
\draw[<-] (S) to node {} (B1);
\draw[<-] (A4) to node {} (T);
\draw[<-] (B4) to node {} (T);
\end{tikzpicture}
\end{center}
Since the inclusion $1_{P_{m-1}}kV^{op} \to 1_TkV^{op}$ agrees with $\Phi(p_{m-1})$
under the identifications in Prop. \ref{peter1prop}, the isomorphism
$\Phi(E_m) \cong X$ follows from Lem. \ref{trilem}.

Secondly, the boundary arc $\tau : Q_{n-1} \to E_m$ determines a short exact sequence
\begin{equation}\label{tauseq2}
0 \to 1_{Q_{n-1}}kV \xto{\Phi(\tau)}  X \to z_T \to 0
\end{equation}
in the category $\Rep_k(V^{op})$. So $C(\Phi(\tau)) \cong z_T$. However, since the disk $D$ in Def. \ref{def:newarcN} determined by the arcs $\{N, E_m, Q_{n-1}\}$ satisfies the embedding criteria of Thm. \ref{embthm}, Lem. \ref{trilem} implies that $C(\tau) \cong N\!$. Combining isomorphisms completes proof of Eqn. \eqref{tsimpeq},
$$\Phi(N) \cong \Phi(C(\tau))\cong C(\Phi(\tau)) \cong z_{T}.$$

{\it Part 4.} This section contains the proofs of Eqns. \eqref{phi1eq}, \eqref{phi2eq} and \eqref{phi3eq}. All of the other equations in the list (2) are computed in an analogous way by symmetry.

For Eqn. \eqref{phi1eq}, the identification $\Phi(P_i) \cong 1_{P_i}V$
follows from Prop. \ref{peter1prop}. This allows us to write the short exact sequences
\eqref{seseqn} as $0\to \Phi(P_{i-1}) \to \Phi(P_i) \to z_{P_i} \to 0$ so that Rmk. \ref{keyrelrmk} gives the relation $\Phi_*(P_i) = [\Phi_*(P_{i-1}), z_{P_i}]_q$ for $0 < i < m$.

To see Eqn. \eqref{phi2eq}, combining the short exact sequence \eqref{tauseq2} and the isomorphism $\Phi(E_m) \cong X$ from Part 2 above gives the short exact sequence $0\to \Phi(Q_{n-1}) \to \Phi(E_m) \to z_{T} \to 0$ so Rmk. \ref{keyrelrmk} implies $\Phi_*(E_m) = [\Phi_*(Q_{n-1}), z_{T}]_q$.

For Eqn. \eqref{phi3eq}, the short exact sequence \eqref{tauseq1} and Rmk. \ref{keyrelrmk} give the relation $T = [P_{m-1}, E_m]_q$ from which we obtain
$\Phi_*(T) = [\Phi_*(P_{m-1}), \Phi_*(E_m)]_q$.
\end{proof}

Before applying the lemma above to Thm. \ref{naivethm} below, recall the definition of the composition subalgebra of a quiver $Q$.

\begin{defn}\label{compalgdef}
The {\em composition subalgebra} $\DC(Q)$ of the Hall algebra 
$$\DC(Q) \subset \tDHa(D^b(\Rep_\kk(Q)))$$
is the subalgebra generated by suspensions of the 1-dimensional simple modules $z_{i,n} := z_i[n]$ for each vertex $i$ and $n\in \ZZ$.
\end{defn}

The theorem below is the principal application of Lem. \ref{phimaplemma}.

\begin{thm}\label{naivethm}
  The derived equivalence $\Phi : D^\pi\F(K,\A) \xto{\sim} D^b(Rep(V^{op}))$ induces an isomorphism 
  $$\Phi_* : \Alg(K, \bA) \xto{\sim} \DC(V^{op})$$
of composition subalgebras of derived Hall algeras.
\end{thm}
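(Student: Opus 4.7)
The plan is to check that the equivalence $\Phi$ of Eqn.~\eqref{parameqn} induces an isomorphism of derived Hall algebras $\Phi_* : \tDHa(\DEF(K_{m,n}, \A)) \xto{\sim} \tDHa(D^b(\Rep_\kk(V_{m,n}^{op})))$ which restricts to the stated isomorphism of composition subalgebras. After composing with the equivalence $\varphi$ of Eqn.~\eqref{vpeqn} so that $\Phi_*$ is defined on the ambient Hall algebra containing $\Alg(K_{m,n}, \bA)$, I would verify the two inclusions $\Phi_*(\Alg(K, \bA)) \subseteq \DC(V^{op})$ and $\DC(V^{op}) \subseteq \Phi_*(\Alg(K, \bA))$ by computing $\Phi_*$ on generators using Lem.~\ref{phimaplemma}.

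For the forward inclusion, every arc $X \in \bA$ either maps directly to a simple module $z_L$ (when $X \in \{S, E_1, \ldots, E_{m-1}, F_1, \ldots, F_{n-1}\}$, by part~(1) of Lem.~\ref{phimaplemma}) or is expressible as an iterated $q$-commutator of simple modules (when $X \in \{P_1, \ldots, P_{m-1}, Q_1, \ldots, Q_{n-1}, E_m, F_n, T\}$, by Eqns.~\eqref{phi1eq}--\eqref{phi3eq} together with the symmetric expressions for the $Q_j$ and $F_n$). Since $\DC(V^{op})$ is closed under $q$-commutators and suspensions, every generator of $\Alg(K, \bA)$ is sent into $\DC(V^{op})$, establishing the forward inclusion.

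For the reverse inclusion, each of the simple modules $z_S, z_{P_i}$, and $z_{Q_j}$ is the $\Phi_*$-image of an arc already in $\bA$ and so lies in $\Phi_*(\Alg(K, \bA))$ by inspection of Lem.~\ref{phimaplemma}. The only remaining generator is $z_T = \Phi_*(N)$, where the special arc $N$ of Def.~\ref{def:newarcN} is \emph{not} in $\bA$, so showing $[N] \in \Alg(K_{m,n}, \bA)$ is the main step. For this, the triangle disk $D$ with arcs $\{N, E_m, Q_{n-1}\}$ embeds into $K_{m,n}$ so as to send its three marked intervals to distinct marked intervals of $K_{m,n}$; the Embedding Theorem~\ref{embthm} then gives a monomorphism $\Alg(D, \{N, E_m, Q_{n-1}\}) \hookrightarrow \Alg(K_{m,n}, \bA \cup \{N\})$. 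Applying Cor.~\ref{diskprescor} and Rmk.~\ref{keyrelrmk} to $D$ equipped with its standard foliation, the arc $N$ is expressible as a $q$-commutator of suspensions of $E_m$ and $Q_{n-1}$. Transporting this relation through the embedding, and noting that $E_m, Q_{n-1} \in \bA$, yields $[N] \in \Alg(K_{m,n}, \bA)$ and therefore $z_T \in \Phi_*(\Alg(K_{m,n}, \bA))$.

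The principal obstacle is precisely this final step: the disk relation for $N$ is initially a statement inside the algebra of the small triangle $D$, and one must transport it into the ambient annulus algebra without $N$ being itself a member of $\bA$. The Embedding Theorem is the tool which makes this possible, and the same mechanism---matching $q$-commutator relations on the quiver side with geometric triangles on the surface side---will be invoked again in~\S\ref{naiveproofsec} to identify Hernandez--Leclerc-type relations for $\DC(V^{op})$ with triangles in $K_{m,n}$ and thereby complete the proof of the Naive Conjecture.
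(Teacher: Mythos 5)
Your overall route is the same as the paper's: identify the ambient Hall algebras via the equivalence coming from $\A\subset\bA$, use functoriality of $\tDHa$ under $\Phi$, and then verify the two inclusions on generators using Lem.~\ref{phimaplemma}; the paper's proof adds an explicit preliminary isomorphism $\Alg(K,\A)\xto{\sim}\Alg(K,\bA)$ but otherwise proceeds exactly as you propose. You are also right to isolate the membership $[N]\in\Alg(K,\bA)$ as the one point needing an argument (the paper passes over it rather quickly when it asserts $z_T\in\im\Phi_*\vert_{\Alg(K,\bA)}$ from list~(1)).

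However, the specific mechanism you give for that step does not work as written: you invoke the Embedding Theorem~\ref{embthm} with target ``$\Alg(K_{m,n},\bA\cup\{N\})$'', but $\bA\cup\{N\}$ is not an arc system. The arcs $T$ and $N$ are the two diagonals of the quadrilateral disk bounded by $\{E_m,Q_{n-1},F_n,P_{m-1}\}$ (this is exactly why replacing $T$ by $N$ is a Pachner flip in the proof of Thm.~\ref{naiveisothm}), so $N$ intersects $T$ and the collection $\bA\cup\{N\}$ violates pairwise disjointness; moreover Thm.~\ref{embthm} requires the embedding to carry the small arc system into the ambient one. The repair is standard and is what the paper effectively does elsewhere: either embed the triangle $D=\{N,E_m,Q_{n-1}\}$ into $(K,\bA')$ with $\bA':=(\bA\setminus\{T\})\sqcup\{N\}$ and transport the resulting relation $N=[E_m,\s^{-1}Q_{n-1}]_q$ through the canonical equivalence $D^\pi\F(K,\bA')\simeq D^\pi\F(K,\bA)$ identifying the ambient Hall algebras (cf.\ Lem.~\ref{ddplemma} and Thm.~\ref{naiveeqvthm}), or argue directly in $D^\pi\F(K,\A)$ from the distinguished triangle over the boundary path $\tau:Q_{n-1}\to E_m$ used in Part~3 of the proof of Lem.~\ref{phimaplemma}, whose cone is $N$; either way the right-hand side involves only $E_m$ and $Q_{n-1}$, which lie in $\bA$, so $[N]\in\Alg(K,\bA)$ and $z_T\in\im\Phi_*\vert_{\Alg(K,\bA)}$ as desired. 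With that substitution your proof is correct and essentially coincides with the paper's.
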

\begin{proof}
The proof involves two separate tasks, each of which follow from arguments
of the same type. The functoriality of the derived Hall algebra is used to
construct an isomorphism and then shown to induce a map between composition
subalgebras by looking at special relations in either case.

The first step is to construct the isomorphism $\Alg(K, \A) \cong \Alg(K, \bA)$. The
 relation $\A\subset\bA$ determines a functor $\F(K, \A) \to \F(K,\bA)$ which gives a functor $\DEF(K, \A) \xto{\sim} \DEF(K, \bA)$. The latter is an equivalence, see Rmk. \ref{ffrmk}. Restricting the map induced by functoriality of the derived Hall construction, produces a monomorphism $\Alg(K, \A)\hookrightarrow \tDHa (D^\pi\F(K, \bA))$ which factors through $\Alg(K, \bA)$ by definition. The resulting map 
$\Alg(K, \A) \to \Alg(K, \bA)$  is necessarily an injective homomorphism, it is onto
because each boundary arc $E_{i}$ is contained the disk $C_{i-1}$ determined by
  the arcs $\{P_{i-1}, P_i, E_i\}$. Since $C_{i-1} \subset K$ satisfies the embedding criteria in Thm. \ref{embthm}, the relation $E_i = [P_i, \s^{-1}P_{i-1}]_q$ must hold in $\Alg(K, \bA)$. The same argument applies to the generators $F_j$.

Now functoriality of the derived Hall construction implies that $\Phi$
induces an isomorphism $\Phi_* : \tDHa (D^\pi\F(K,\A)) \xto{\sim} \tDHa (D^b(Rep(V^{op})))$.
As an isomorphism, $\Phi_*\vert_{\Alg(K,\bA)}$ remains injective when restricted 
to the composition subalgebra $\Alg(K,\bA) \subset \tDHa(D^\pi\F(K,\A))$. On the other hand, the list (1) in Lem. \ref{phimaplemma} implies that
$$\{ z_S, z_{P_i}, z_{Q_j}, z_T : 1 \leq i < n, 1\leq j < m \} \subset \im \Phi_*\vert_{\Alg(K,\bA)}$$
and since $\Phi_*$ commutes with suspensions, it follows that the image of $\Phi_*\vert_{\Alg(K,\bA)}$ contains the generators of the composition subalgebra $\DC(V^{op})\subset \im \Phi_*\vert_{\Alg(K,\bA)} \subset \tDHa (D^b(Rep(V^{op})))$. 

On the other hand, the list (2) in Lem. \ref{phimaplemma} shows that the image
of any other arc generating $\Alg(K,\bA)$ is contained in the subalgebra
$\DC(V^{op})$ generated by the simple modules $z_{i,n}$ for $i\in \bA$.  So $\Phi_*(\bA) \subset \DC(V^{op})$.

It follows that $\im \Phi_*\vert_{\Alg(K,\bA)} = \DC(V^{op})$,  so that an isomorphism between composition subalgebras is obtained by restriction, $\Phi_* := \Phi_*\vert_{\Alg(K,\bA)} : \Alg(K, \bA) \xto{\sim} \DC(V^{op})$.
\end{proof}

\begin{rmk}\label{iammyowngrandparmk}
Although an equivalence $\Phi_*$ exists for the non-naive annuli, $K_{m,n}$
with $n=1$ or $m=1$, the equivalence does not descend to a map between
composition subalgebras as above.
  \end{rmk}

\subsection{Relations for the composition subalgebra $\DC(V)$}\label{relcompsec}

Thm \ref{naivethm} shows that the composition subalgebra $\Alg(K,\bA)$ is isomorphic to the composition subalgebra $\DC(V^{op})$. The purpose of this section is to introduce relations for the latter algebra.

Prop. \ref{kalgdef} below is due to Hernandez-Leclerc \cite[Prop. 8.1]{HL},
it relies on To\"{e}n \cite[Prop. 7.1]{Toen} which in turn extends Ringel
\cite{Ringel}.  These relations agree with those in \cite[\S 2.2]{sillyman1}.

Recall that $z_L$ is the 1-dimensional simple module associated to the vertex $L\in V^{op}$.

\begin{prop}\label{kalgdef}
  The composition subalgebra $\tCHa(Q)$ of an acyclic quiver is generated by
  the symbols $z_{i,n}$ parameterized by vertices $i \in I$ and $n\in\ZZ$ subject to the   relations below.
\begin{description}
\item[(K0)\namedlabel{k0:itm}{\lab{K0}}] for $n\in\ZZ$, if $i\cdot j = 0$
  $$[z_{i,n}, z_{j,n+k}]_{1} = 0$$
\item[(K1)\namedlabel{k1:itm}{\lab{K1}}] for $n\in \ZZ$, if $i\cdot j = -1$
\begin{align*}
[z_{i,n}, [z_{i,n},z_{j,n}]_{q^{\mp 1}}]_{q^{\pm 1}} &= 0\\
[z_{i,n}, z_{j,n+k}]_{q^{(-1)^k}} &= 0 \conj{ for } k\geq 1
\end{align*}
\item[(K2)\namedlabel{k2:itm}{\lab{K2}}] for $n\in\ZZ$, if $i\cdot j = 2$
  $$[z_{i,n}, z_{j,n+k}]_{q^{(-1)^k i\cdot j}} = \d_{k,1}\d_{i,j} \frac{q^{-1}}{q^2-1}\conj{ for } k \geq 1$$
\end{description}
where the pairing $i \cdot j := \inp{z_{i,0}, z_{j,0}} + \inp{z_{j,0}, z_{i,0}}$ is as in \S\ref{hallsec}.
\end{prop}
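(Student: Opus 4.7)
The plan is to reduce the proposition to the established results of Toën and Hernandez--Leclerc, verifying along the way that the relations (K0)--(K2) capture precisely the structure constants computed from the Euler form. Since the statement is essentially a reformulation of results already in the literature, my proof would proceed by dictionary rather than by first principles.

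First I would set up the basic computation: for each pair of vertices $i,j$, the Euler form $\inp{z_i, z_j}$ is determined by the number of arrows $i\to j$ in $Q$, since $\Ext^0(z_i,z_j) = \d_{i,j}\kk$, $\Ext^1(z_i,z_j) = \kk^{\#\{i\to j\}}$, and higher Exts vanish because the path algebra of an acyclic quiver has global dimension one. Thus the symmetrized pairing $i\cdot j$ takes only the values $2$ (when $i=j$), $-1$ (when $i,j$ are connected by a single arrow, which we may assume is the only possibility after passing to the relevant subquiver), and $0$ (when $i,j$ are not connected). This trichotomy matches the three families of relations in the proposition.

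Next I would verify each family by direct computation of Toën's structure constants from Eqn. \eqref{toeneq}. For (K0), when $i\not\sim j$, all relevant Ext groups vanish, and so the derived Hall product of $z_{i,n}$ and $z_{j,n+k}$ is symmetric, yielding the $q$-commutator with parameter $1$. For (K2), the self-extension relation is the rank-one version of the Ringel--Green computation; one explicitly computes $z_{i,n}\cdot z_{i,n+k}$ and its reverse, using that $\Ext^1(z_i,z_i)=0$ for $i$ a sink in the relevant subquiver, and the correction term $q^{-1}/(q^2-1)$ arises from the self-extensions coming from the suspension shift at $k=1$ (this is a standard derived Hall algebra computation, cf.\ \cite[Prop. 7.1]{Toen}). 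For (K1), the quantum Serre relation is the content of Hernandez--Leclerc's \cite[Prop. 8.1]{HL}; I would quote this and check that the sign conventions and $q$-parameters in the bracket agree with the Euler form normalization used here.

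The main obstacle --- and the part genuinely inherited from \cite{HL} --- is showing \emph{completeness}, i.e.\ that the listed relations suffice to present the composition subalgebra. This is proved by exhibiting a PBW-type basis for $\tCHa(Q)$ indexed by the positive part of the corresponding quantum affinization, matching dimensions with the algebra presented abstractly by (K0)--(K2). I would not redo this argument but simply cite \cite[Prop. 8.1]{HL} (which in turn uses \cite[Prop. 7.1]{Toen} extending Ringel \cite{Ringel} to the derived setting). The only novelty in our setting is keeping track of the conventions so that the presentation aligns with the one in \cite[\S 2.2]{sillyman1}; once the generators $z_{i,n}$ are identified with simple modules and suspensions, this is a bookkeeping check.
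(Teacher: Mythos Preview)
Your proposal is correct and aligns with the paper's own treatment: the paper does not prove this proposition at all, but simply attributes it to Hernandez--Leclerc \cite[Prop.~8.1]{HL}, which in turn relies on To\"{e}n \cite[Prop.~7.1]{Toen} extending Ringel, and notes that the relations agree with those in \cite[\S 2.2]{sillyman1}. Your sketch of the dictionary and the verification of conventions is more explicit than what the paper provides, but the substance---citation rather than first-principles proof---is the same.
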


The remainder of this section applies this proposition to the
quivers $V_{m,n}$ from \S \ref{nanparamsec}.  By the proposition,
the graph $V_{m,n}$ together with the Euler form on the Grothendieck
group of its representation category suffice to determine a presentation
for $\DC(V^{op})$.

The Grothendieck group $K_0(\Rep_\kk(V_{m,n}^{op}))$ is given by the lattice
spanned by the simple modules $p_i$ for $0\leq i \leq m$ and $q_j$ for
$0\leq j \leq n$ associated to the vertices of $V_{m,n}^{op}$ after imposing
the relations $p_0 = q_0$ and $p_m = q_n$.

Recall that the Euler form in Eqn. \eqref{eulerformeq} of a quiver such as
$V_{m,n}^{op}$ can be computed from the graph \cite[Prop. 8.4]{Schiffler}. 
If $m>1$ or $n> 1$ then the Euler form is determined by the equations
\begin{align*}
p_i\cdot p_i = 2 \normaltext{ for } 0\leq i \leq m & \conj{ and }  q_i \cdot q_i = 2  \normaltext{ for } 0 \leq i \leq n,\\
p_i \cdot p_{i+1} = -1 \normaltext{ for } 0 \leq i < m & \conj{ and }  q_i \cdot q_{i+1}  = -1 \normaltext{ for } 0 \leq i < n.
\end{align*}
All other vectors pair to zero. These assignments in conjunction with Prop. \ref{kalgdef} above give
presentations for the composition subalgebras $\DC(V^{op})$.

\subsection{Proof of the naive conjecture}\label{naiveproofsec}
In this section, Thm. \ref{naiveisothm} shows that the comparison map
$\ga_{\bA} : \NAlg(K_{m,n}, \bA) \to \Alg(K_{m,n}, \bA)$ is an isomorphism
when $m, n \geq 2$. This establishes the naive conjecture for annuli.

Before proving the theorem it is useful to check that certain relations hold
in the algebras of interest.
\begin{lemma}\label{ddplemma}
When $m,n\geq 2$, the following relations hold in both the composition subalgebra $\Alg(K_{m,n}, \bA)$ and the naive algebra $\NAlg(K_{m,n},\bA)$.
\begin{enumerate}
\item  Disk relations from disks $C_{m-1}$ and $D_{n-1}$ determined by the arcs $\{E_m, T, P_{m-1}\}$ and $\{F_n, T, Q_{n-1}\}$
\begin{align*}
  T &= [P_{m-1}, E_m]_q                &   T &= [Q_{n-1}, F_n]_q \\
  E_m &= [T, \s^{-1}P_{m-1}]_q          &   F_n &= [T, \s^{-1} Q_{n-1}]_q\\
  P_{m-1} &= [\s E_m, T]_q             &   Q_{n-1} &= [\s F_n , T]_q
\end{align*}
\item  Disk relations from disks $D$ and $D'$ from Def. \ref{def:newarcN} determined by the arcs 
$\{E_m, Q_{n-1}, N\}$ and $\{F_n, P_{m-1}, N\}$
\begin{align*}
  E_m &= [Q_{n-1}, N]_q          & F_n &= [P_{m-1}, N]_q\\
  N &= [E_m,\s^{-1} Q_{n-1}]_q              & N &= [F_n, \s^{-1} P_{m-1}]_q \\
  Q_{n-1} &= [\s N, E_m]_q                 & P_{m-1} &= [\s N, F_n]_q
  \end{align*}
\end{enumerate}
  \end{lemma}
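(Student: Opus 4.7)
The plan is to recognize each displayed relation as one of the three (R2)-relations from Cor. \ref{diskprescor} applied to a three-arc disk, and then to propagate those relations into the two algebras of interest by either Thm. \ref{embthm} or Thm. \ref{naiveeqvthm}.

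For part (1), the disks $C_{m-1}$ and $D_{n-1}$ already appear in the canonical decomposition $K_{m,n}\backslash \bA$. Under the standard foliation data of Def. \ref{stdfoldef}, the disk $C_{m-1}$ has $h(E_m)=1$ and $h(P_{m-1})=h(T)=0$, so Rmk. \ref{keyrelrmk} writes out precisely the three displayed commutator identities; the same goes for $D_{n-1}$. In $\NAlg(K_{m,n},\bA)$ these are part of the defining presentation by Def. \ref{naivedef}. In $\Alg(K_{m,n},\bA)$, the assumption $m,n\geq 2$ ensures that the inclusions $C_{m-1}\hookrightarrow K_{m,n}$ and $D_{n-1}\hookrightarrow K_{m,n}$ satisfy the $\pi_0$-injectivity hypothesis of Thm. \ref{embthm}, and the disk relations push forward.

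For part (2), let $\bA':=(\bA\backslash\{T\})\cup\{N\}$ be the alternate full arc system in which $D$ and $D'$ from Def. \ref{def:newarcN} occur as disks of the decomposition $K_{m,n}\backslash \bA'$. For $\Alg(K_{m,n},\bA)$, the embedded disks $D, D'\hookrightarrow K_{m,n}$ each satisfy the $\pi_0$-injectivity criterion (using $m,n\geq 2$), so by Thm. \ref{embthm} the three-arc (R2)-relations of each push forward to $\Alg(K_{m,n},\bA\cup\{N\})$. Since $\bA$ is full, the arc $N$ is a twisted complex of arcs in $\bA$ by Rmk. \ref{ffrmk}, so the inclusion $\Alg(K_{m,n},\bA)\hookrightarrow \Alg(K_{m,n},\bA\cup\{N\})$ is onto and the relations descend. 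For $\NAlg(K_{m,n},\bA)$, apply Thm. \ref{naiveeqvthm} twice, both with the intermediate system $\bA\backslash\{T\}=\bA'\backslash\{N\}$, to obtain a chain of isomorphisms
\[
\NAlg(K_{m,n},\bA)\;\cong\;\NAlg(K_{m,n},\bA\backslash\{T\})\;\cong\;\NAlg(K_{m,n},\bA')
\]
compatible with the comparison maps $\ga$. In $\NAlg(K_{m,n},\bA')$ the disks $D$ and $D'$ lie in the decomposition, so their (R2)-relations are imposed by Def. \ref{naivedef}. Transporting these backward along the chain realizes $N$ as a distinguished element of $\NAlg(K_{m,n},\bA)$ and verifies the six relations stated in (2).

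The main obstacle I anticipate is verifying that the intermediate arc system $\bA\backslash\{T\}$ has enough marked intervals in the sense of Def. \ref{def:enough}: collapsing $T$ merges $C_{m-1}$ and $D_{n-1}$ into one quadrilateral disk with boundary arcs $\{P_{m-1},E_m,F_n,Q_{n-1}\}$, and its four surface-marked intervals must inject into $\pi_0(M)$. This is exactly where the hypothesis $m,n\geq 2$ is used: at least one additional boundary arc on each boundary component of $K_{m,n}$ is needed to separate the quadrilateral's four corners. One further needs to track the foliation data through the edge-collapse formula \eqref{gluefoleq} and confirm the cyclic orderings around each three-arc disk produce the commutators with the stated suspensions, but these are routine once the balanced standard foliation of Cor. \ref{balancedprop} and Def. \ref{stdfoldef} has been fixed.
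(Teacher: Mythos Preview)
Your proposal is correct and is essentially a careful unpacking of the paper's one-line proof, which simply invokes Prop.~\ref{nembthm} (the naive-algebra analogue of the embedding theorem) for each of the four disks $C_{m-1}$, $D_{n-1}$, $D$, $D'$. For part~(1) your argument is identical to the intended one. For part~(2) you correctly observe that $D$ and $D'$ are not disks of the $\bA$-decomposition, and you supply the missing Pachner-move step via Thm.~\ref{naiveeqvthm}; the paper later uses exactly this move in the proof of Thm.~\ref{naiveisothm} (Step~\#2, case~(2)), so your elaboration is in the spirit of the paper rather than a genuinely different route.

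One small technical slip: in your $\Alg$ argument for part~(2) you push the disk relations into $\Alg(K_{m,n},\bA\cup\{N\})$, but $\bA\cup\{N\}$ is not an arc system, since $T$ and $N$ are the two diagonals of the quadrilateral $\{P_{m-1},E_m,Q_{n-1},F_n\}$ and hence intersect. The fix is immediate: embed $D$ and $D'$ into $(K_{m,n},\bA')$ instead, and then use the bottom isomorphism $\Alg(K_{m,n},\bA')\cong\Alg(K_{m,n},\bA)$ from the diagram in Thm.~\ref{naiveeqvthm} (applied twice through $\bA\backslash\{T\}=\bA'\backslash\{N\}$, exactly as you already do for $\NAlg$). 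Your identification of the ``enough marked intervals'' check for $\bA\backslash\{T\}$ as the place where $m,n\geq 2$ enters is correct.
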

\begin{proof}
All of the relations follow from the observation that they hold in the disk algebras of embedded disks together with Prop. \ref{nembthm}.

  \end{proof}

When $m=1$ or $n=1$ the disk containing the unique boundary arc in
$K_{m,n}\backslash \bA$ does not satisfy the embedding criteria of
Thm. \ref{embthm} so the annulus is not naive. The theorem below shows that
the naive conjecture \ref{naiveconj} holds when $m,n \geq 2$.

\begin{theorem}\label{naiveisothm}
When $m,n\geq 2$, the map $\ga_{\bA}$ is an isomorphism.
\end{theorem}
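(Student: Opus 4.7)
The strategy is to combine the isomorphism $\Phi_* \colon \Alg(K,\bA) \xto{\sim} \DC(V^{op})$ of Thm.~\ref{naivethm} with the Hernandez--Leclerc presentation of Prop.~\ref{kalgdef} in order to construct a left inverse to the surjection $\ga_{\bA} \colon \NAlg(K,\bA) \twoheadrightarrow \Alg(K,\bA)$. More precisely, I will define an algebra homomorphism $\psi \colon \DC(V^{op}) \to \NAlg(K,\bA)$ and verify that $\psi \circ \Phi_* \circ \ga_{\bA} = \mathrm{id}_{\NAlg(K,\bA)}$ on the generating arcs in $\bA$. This exhibits $\Phi_* \circ \ga_\bA$ as injective; since it is already surjective and $\Phi_*$ is an isomorphism, $\ga_{\bA}$ itself must be an isomorphism.

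The definition of $\psi$ is dictated by Lem.~\ref{phimaplemma}(1): set $\psi(z_S) := S$, $\psi(z_{P_i}) := E_i$ for $1 \leq i < m$, $\psi(z_{Q_j}) := F_j$ for $1 \leq j < n$, and $\psi(z_T) := [E_m, \s^{-1} Q_{n-1}]_q$, equivariantly with respect to suspension. The formula for $\psi(z_T)$ is motivated by Lem.~\ref{ddplemma}(2), which identifies this commutator with the ``missing arc'' $N$ of Def.~\ref{def:newarcN}, and by $\Phi_*(N) = z_T$ from Eqn.~\eqref{tsimpeq}. A preliminary lemma records that the two expressions $[E_m, \s^{-1}Q_{n-1}]_q$ and $[F_n, \s^{-1}P_{m-1}]_q$ agree in $\NAlg(K,\bA)$; this can be extracted from relating both to the disks $D$ and $D'$ through the embedding Prop.~\ref{nembthm}.

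Verifying that $\psi$ is well defined reduces to checking the Hernandez--Leclerc relations (K0), (K1), (K2) in $\NAlg(K,\bA)$ under the above assignments. Using the Euler pairing computed in \S\ref{relcompsec}: (K0) cases translate into pairs of arcs that lie in non-adjacent disks of the decomposition, and hence follow directly from the (G3) far-commutativity relations of Prop.~\ref{naiveannulusrels}; the (K1) cases for edges $P_{i-1} \to P_i$ with $i < m$, and dually $Q_{j-1} \to Q_j$ with $j < n$, follow from the presentation of Cor.~\ref{diskprescor} applied to the embedded triangle disks $C_{i-1}, D_{j-1}$ via Prop.~\ref{nembthm}; and the (K2) self-extension relations descend from the (R1) disk relations in these same disks. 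The genuinely new work is the (K1) relation at the outermost edges $P_{m-1} \to P_m$ and $Q_{n-1} \to Q_n$, where one side of the commutator is $\psi(z_T) = [E_m, \s^{-1}Q_{n-1}]_q$. I expect this to be the main obstacle, handled by the following substitution: rewrite $\psi(z_T)$ as a commutator supplied by the disk $D'$ of Def.~\ref{def:newarcN} and then apply the (R2) triangle relations of the disks $C_{m-1}$ and $D_{n-1}$ of Lem.~\ref{ddplemma}(1), together with (G3), to expand the iterated commutator into elements manipulable within a single disk algebra.

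Finally, to check $\psi \circ \Phi_* \circ \ga_{\bA} = \mathrm{id}$, it suffices to verify the equality on each generating arc. For $S, E_i$ with $i < m$, and $F_j$ with $j < n$, this is immediate from Lem.~\ref{phimaplemma}(1). For the internal arcs $P_i$ and $Q_j$, a straightforward induction using Eqn.~\eqref{phi1eq} together with the relation $P_i = [P_{i-1}, E_i]_q$ in the disk $C_{i-1}$ (Lem.~\ref{ddplemma} and Cor.~\ref{diskprescor}) gives $\psi \circ \Phi_*(P_i) = P_i$ in $\NAlg$. For the remaining arcs $E_m, F_n, T$, the identifications in Lem.~\ref{phimaplemma}(2) together with the disk relations Lem.~\ref{ddplemma}(1)--(2) complete the verification. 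Once both tasks are established, $\ga_{\bA}$ is an isomorphism, as desired.
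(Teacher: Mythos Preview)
Your overall strategy coincides with the paper's: compose $\ga_{\bA}$ with $\Phi_*$ and build a left inverse $\DC(V^{op})\to\NAlg(K,\bA)$ by sending $z_S\mapsto S$, $z_{P_i}\mapsto E_i$, $z_{Q_j}\mapsto F_j$, $z_T\mapsto N$, then verify the Hernandez--Leclerc relations \eqref{k0:itm}--\eqref{k2:itm} and finally check the composite is the identity on the arcs of $\bA$ using the inductive formulas of Lem.~\ref{phimaplemma}(2). Your Step~3 outline matches the paper's Step~\#3 essentially verbatim.

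There is, however, a genuine gap in your treatment of \eqref{k0:itm}. You assert that all (K0) cases ``follow directly from the (G3) far-commutativity relations of Prop.~\ref{naiveannulusrels}''. This is fine for pairs among $\{S, E_i, F_j\}$, but it breaks down for the pairs involving $z_T$, because $\psi(z_T)=N$ is \emph{not} an arc in $\bA$. Concretely, you must show $[S,\s^\ell N]_1=0$ and $[E_i,\s^\ell N]_1=0$ for $0<i<m-1$ in $\NAlg(K,\bA)$, and none of these appear among the (G3) relations of Prop.~\ref{naiveannulusrels}; $N$ is only a $q$-commutator of generators, and commuting it past $S$ or a distant $E_i$ is not automatic from far-commutativity of the constituents (e.g.\ $Q_{n-1}$ does not commute with $S$). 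The paper resolves this with an extra idea you have not mentioned: perform the Pachner move $\bA' := (\bA\setminus\{T\})\cup\{N\}$, check that $(K,\bA')$ still has enough marked intervals, and invoke Thm.~\ref{naiveeqvthm} to transport the required commutators, which \emph{are} (G3) relations in $\NAlg(K,\bA')$, back through the isomorphism $\NAlg(K,\bA)\cong\NAlg(K,\bA')$. The same issue recurs, in milder form, in your \eqref{k2:itm} verification for $z_T$: the self-extension relation for $N$ is not an (R1) relation in any disk of the decomposition by $\bA$; the paper obtains it from the embedded disk $D$ of Def.~\ref{def:newarcN}, again using that $N$ is a boundary arc there.

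A smaller remark: you flag the (K1) relation at the edges $P_{m-1}\to T$ and $Q_{n-1}\to T$ as the ``main obstacle'' and propose an iterated-commutator manipulation. In the paper this step is actually handled uniformly with the other (K1) cases: one exhibits an embedded disk in which $\vp(z_{P_{m-1}})=E_{m-1}$ and $\vp(z_T)=N$ are consecutive boundary arcs, and then the (K1) relations are instances of (R2) and of \cite[Lem.~5.18]{sillyman1} via Prop.~\ref{nembthm}. The harder hidden step is the (K0)/$z_T$ case above.
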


\begin{proof}
The map $\ga_{\bA} : \NAlg(K_{m,n}, \bA)\twoheadrightarrow \Alg(K_{m,n}, \bA)$ is onto by Thm. 5.33 \cite{sillyman1}. By Thm. \ref{naivethm}, there is an isomorphism $\Phi_* : \Alg(K_{m,n}, \bA) \xto{\sim} \DC(V^{op})$. So there is a diagram,
\begin{center}
\begin{tikzpicture}[scale=10, node distance=2cm]
\node (A) {$\NAlg(K_{m,n}, \bA)$};
\node (X) [below=2.2 of A] {};
\node (B) [left=1 of X] {$\Alg(K_{m,n}, \bA)$};
\node (C) [right=1 of X] {$\DC(V_{m,n}^{op})$};
\draw[->] (A) to node [swap] {$\ga_{\bA}$} (B);
\draw[->] (B) to node  {$\Phi_*$} (C);
\draw[->, bend right=15] (A) to node [swap] {$\psi$} (C); 
\draw[->, bend right=15, dashed] (C) to node [swap] {$\varphi$} (A); 
\end{tikzpicture} 
\end{center}
in which $\psi := \Phi_* \circ \ga_{\bA}$. In order to show that $\ga_{\bA}$ is injective, it suffices to construct a homomorphism $\varphi : \DC(V^{op}_{m,n}) \to \NAlg(K_{m,n}, \bA)$ which satisfies 
\begin{equation}\label{leftinveq}
  \varphi\circ \psi = 1_{\NAlg(K,\bA)}.
  \end{equation}
The proof consists of four steps. The first step is to define the maps $\psi$ and $\varphi$ on generators. Since $\psi = \Phi_*\circ \ga_{\bA}$ by definition, the map $\psi$ is a homomorphism. In Step \#2, $\varphi$ is shown to be a homomorphism. In Step \#3, Eqn. \eqref{leftinveq} is shown to hold.

{\it Step \#1.} Since $\ga_{\bA}\vert_{\bA} = 1$ and the commutative diagram is $\s$-equivariant,  the homomorphism $\psi = \Phi_*\circ \ga_{\bA}$ agrees with $\Phi_*$ on generating arcs $\bA$ and the assignments below follow from list (1) of Lem. \ref{phimaplemma}. 
\begin{align}
\psi(S) &= z_S  & \psi(N) &= z_T  \notag\\
\psi(E_i) &= z_{P_i} & \psi(F_j) &= z_{Q_j} \notag
\end{align}
for $1 \leq i < m$ and $1 \leq j < n$. The value of $\psi$ on other arcs is determined by the inductive formula in list (2) of Lem. \ref{phimaplemma}.
\begin{align}
\psi(P_i) &= [\psi(P_{i-1}), z_{P_i}]_q & \normaltext{ for } 0 < i < m \label{psi1eq}\\
\psi(Q_j) &= [\psi(Q_{j-1}), z_{Q_j}]_q & \normaltext{ for } 0 < j < n \notag\\
\psi(E_m) &= [\psi(Q_{n-1}), z_T]_q & \label{psi2eq}\\
\psi(F_n) &= [\psi(P_{m-1}), z_T]_q & \notag\\
\psi(T) &= [\psi(P_{m-1}),[\psi(Q_{n-1}), z_T]_q ]_q & \label{psi3eq} 
\end{align}

Since the map $\varphi$ must invert the choices for $\psi$ on generating arcs, the assignments below are 
prescribed.
\begin{align}
  \vp(z_S) &= S  & \vp(z_T) &= N \label{defofphi}\\
  \vp(z_{P_i}) &= E_i &  \vp(z_{Q_j}) &= F_j \notag
\end{align}
where $1\leq i < m$ and $1\leq j < n$. (Note since $N\not\in\bA$, technically the definition of $\vp$ uses Lem. \ref{ddplemma} to express $N$ in terms of generators in $\bA$.)

{\it Step \#2.} In order to prove that the definition of $\varphi$  in Step \#1 determines a homomorphism 
$$\varphi : \DC(V^{op}) \to \NAlg(K,\bA)$$ 
it suffices to show that the Eqns. \eqref{k0:itm}, \eqref{k1:itm} and
\eqref{k2:itm} from Prop. \ref{kalgdef} vanish in the image of $\vp$. To demonstrate that this is so, in each case, this proof either invokes a far commutativity \eqref{g3:itm}-relation from Def. \ref{naivedef} or uses Prop. \ref{nembthm} to show that the relation holds in the Hall algebra of an embedded disk.

{\it For Eqn. \eqref{k0:itm}:} If $z$ and $w$ are 1-dimensional simple modules and the Euler form is $z\cdot w = 0$ then we will show that the relation 
$$\vp[z, \s^kw]_{1} = 0 \conj{ for } k \in \ZZ$$
holds in $\NAlg(K,\bA)$. There are several cases,
\begin{enumerate}
\item $p_i \cdot q_j = 0$ for all $0<i< m$ and $0<j<n$ with $i\ne j$.  This case follows immediately from Eqn. \eqref{ef3} because  $\vp[z_{P_i}, z_{Q_j,\ell}]_1 = [E_i, F_{j,\ell}]_1 = 0.$

\item $p_i \cdot p_j = 0$ for all $0\leq i < m$ and, $j< i-1$ or $j > i+1$.
If $i = 0$ and $1 < j < m$ then the relation becomes $\vp[z_{P_0}, z_{P_j, \ell}]_1 = [S, E_{j, \ell}]_1$ which follows from Eqn. \eqref{st1}.  If $0< i <j < m$ then $\vp[z_{P_i}, z_{P_j, \ell}]_1 = [E_i, E_{j,\ell}]_1 = 0$ by Eqn. \eqref{ef1}. Similarly, the cases $0= j< i < m$ and $0< j< i < m$ are covered by Eqn. \eqref{st1} and Eqn. \eqref{ef1}. 

Two cases remain, in both $j=m$ and either $i=0$ or $0 < i < m-1$. In the image of $\vp$ they are
\begin{equation}\label{lastoneseqn}
  [S, \s^\ell N]_1 = 0 \conj{ and } [E_i, \s^\ell N]_1 = 0
  \end{equation}
respectively. Form a new arc system $\bA'$ by removing $T$ from $\bA$ and replacing it with $N$; $\bA' := (\bA\backslash \{T\}) \sqcup \{N\}$. This Pachner move preserves the disk embedding property because the disk $(D^2, \{E_m, Q_{n-1}, F_n, P_{m-1}\})$ is embedded fully faithfully in $K_{m,n}$, see Def. \ref{def:newarcN}. By Thm. \ref{naiveeqvthm} there is an isomorphism
$$f : \NAlg(K,\bA) \xto{\sim} \NAlg(K, \bA')$$
associated to this Pachner move. Since the arcs do not share marked intervals, the relations Eqn. \eqref{lastoneseqn} occur as \eqref{g3:itm} relations in $\NAlg(K, \bA')$, but $f$ is an isomorphism which satisfies $f(S) = S$, $f(E_i) = E_i$ and $f(N) = N$.

\item $q_i \cdot q_j = 0$ for all $0\leq i < n$ and, $j< i-1$ or $j > i+1$. The proof is symmetric to that of case (2) above.

\end{enumerate}

{\it For Eqn. \eqref{k1:itm}:} Suppose that $\vp(z)$ and $\vp(w)$ appear as
consecutive boundary arcs in a disk $R\hookrightarrow K$ that satisfies the embedding criteria in Prop. \ref{nembthm}. Then the relations
\begin{align}
\vp[z, [z,w]_{q^{\mp 1}}]_{q^{\pm 1}} &= 0 \label{k1a}\\
\vp[z, \s^k w]_{q^{(-1)^k}} &= 0 \conj{ for } k\geq 1 \label{k1b}
\end{align}
hold in $\NAlg(K, \bA)$.
Eqn. \eqref{k1b} agrees with the (R2)-relation in Cor. \ref{diskprescor}.
This is true for Eqn. \eqref{k1a} because Lem. 5.18 \cite{sillyman1} shows that this relation holds for consecutive arcs in the composition subalgebra of a disk.

The pairs $\{ \vp(z_S), \vp(z_{P_1}) \}$ and $\{ \vp(z_S), \vp(z_{Q_1}) \}$
are consecutive arcs in the disks $C_0$ and $D_0$ respectively. For $0< i
\leq m-2$ and $0 < j \leq n-2$, the arcs $\{ \vp(z_{P_i}), \vp(z_{P_{i+1}})
\} = \{E_i, E_{i+1} \}$ and $\{ \vp(z_{Q_j}), \vp(z_{Q_{j+1}}) \} = \{F_j,
F_{j+1}\}$ are consecutive arcs in the disks $C_{i-1}\sqcup_{P_{i+1}} C_i$
and $D_{j-1} \sqcup_{Q_{j+1}} D_j$ respectively. The arcs $\{ \vp(z_{P_{m-1}}), \vp(z_{T}) \} = \{E_{m-1}, N \}$ are consecutive in the disk $D$ from Def. \ref{def:newarcN}. Likewise the arcs $\{ \vp(z_{Q_{n-1}}), \vp(z_{T}) \} = \{F_{n-1}, N \}$ are consecutive in the disk $D'$ from Def. \ref{def:newarcN}.

{\it For Eqn. \eqref{k2:itm}:} Notice that when $z$ and $w$ are 1-dimensional simple modules $z\cdot w = 2$ implies that $z=w$. So suppose that $z$ is a 1-dimensional simple module and $\vp(z)$ is an arc in $K_{m,n}$. Then since the relation
  $$[\vp(z), \s^k\vp(z)]_{1} = 0$$
agrees with the (R1)-relation in Cor. \ref{diskprescor}, it must hold in $\NAlg(K,\bA)$ when $\vp(z)$ is the boundary arc of a disk $R \hookrightarrow K$ that satisfies the embedding criteria in Prop. \ref{nembthm}. The disk $R$ which implies this relation depends on the generator $z$. For the generators $z_S$, $z_{P_i}$ and $z_{Q_j}$, the arcs $\vp(z_S) = S$, $\vp(z_{P_i}) = E_i$ and $\vp(z_{Q_j})$ bound the disks $C_0$, $C_{i-1}$ and $D_{j-1}$ in $K$. Also for $z_T$, the arc $\vp(z_T) = N$ bounds the disk $D \subset K$ from Def. \ref{def:newarcN}.

{\it Step \#3.} The map $\psi$ is a left inverse of $\varphi$, $\varphi\circ\psi = 1_{\NAlg(K,\bA)}$. It suffices to check this equation on generators. The definitions in Step \#1 and Step \#2 show that the equation holds tautologically on the generators $S$, $E_i$ for $1\leq i < m$ and $F_j$ for $1 \leq j < n$.

Now we show that relation holds on generators $P_i$ using induction. The base case follows by definition because $P_0 := S$. Assuming $\varphi\psi(P_i) = P_i$,
\begin{align*}
  \varphi\psi(P_{i+1}) &= \varphi[\psi(P_i), z_{P_i}]_q & \nt{\eqref{psi1eq}}\\
  &= [\varphi\psi(P_i), \varphi(z_{P_i})]_q & \normaltext{(Step \#2)}\\
  &= [P_i, \varphi(z_{P_i})]_q & \nt{(Induction)}\\
  &= [P_i, E_i]_q & \nt{\eqref{defofphi}}\\
  &= P_{i+1} & \nt{(Lem. \ref{ddplemma})}
\end{align*}
In summary, by symmetry 
\begin{equation}\label{invpieqn}
\varphi\psi(P_i) = P_i \nt{ for } 0 \leq i < m  \conj{ and } \varphi\psi(Q_j) = Q_j \nt{ for } 0\leq j < n.
  \end{equation}

It remains to show that $\varphi\psi = 1$ on the generators $E_m$, $F_n$ and
$T$.
\begin{align*}
  \varphi\psi(E_m) &= \varphi[\psi(Q_{n-1}), z_T]_q & \nt{\eqref{psi2eq}}\\
  &= [\varphi\psi(Q_{n-1}), \varphi(z_T)]_q & \nt{(Step \#2)}\\
  &= [Q_{n-1}, \varphi(z_T)]_q & \nt{\eqref{invpieqn}}\\
  &= [Q_{n-1}, N]_q & \nt{\eqref{defofphi}}\\
  &= E_m & \nt{(Lem. \ref{ddplemma})}
  \end{align*}
The proof that $\varphi\psi(F_n) = F_n$ is symmetric. Lastly,
\begin{align*}
  \varphi\psi(T) &= \varphi[\psi(P_{m-1}), [\psi(Q_{n-1}), z_T]_q ]_q & \nt{\eqref{psi3eq}}\\
  &= [\vp\psi(P_{m-1}), [\vp\psi(Q_{n-1}), \vp(z_T)]_q ]_q & \nt{(Step \#2)}\\
&= [P_{m-1}, [Q_{n-1}, \vp(z_T)]_q ]_q & \nt{\eqref{invpieqn}}\\
&= [P_{m-1}, [Q_{n-1}, N]_q ]_q & \nt{\eqref{defofphi}}\\
&= [P_{m-1}, E_m ]_q & \nt{(Lem. \ref{ddplemma})}\\
&= T & \nt{(Lem. \ref{ddplemma})}
\end{align*}

  \end{proof}

\section*{Glossary of notation}\label{kcglossarysec}
\noindent
\begin{multicols}{2}
\begin{list}{}{
  \renewcommand{\makelabel}[1]{#1\hfil}
}
\item[$X\cdot Y$] \S\ref{hallsec}
\item[$(X,Y)$] \S\ref{hallsec}
\item[$\sqcup_{i,j}$] \S\ref{gluingconjecturesec}
\item[$\aprod_{i,j}$] Thm. \ref{gluethm}
\item[$A$] \S\ref{surfsec}
\item[$\A_m$] Ex. \ref{exex}
\item[$\A_{m,n}$] \S\ref{arcansec}
\item[$\bA_{m,n}$] \S\ref{arcansec}
\item[balanced] Def. \ref{balanceddef}
\item[boundary path] Def. \ref{fukcatdef}
\item[$C_i$] Def. \ref{disklabelsdef}
\item[$D^2$] disk
\item[$D_i$] Def. \ref{disklabelsdef}
\item[$\DC(Q)$] Prop. \ref{kalgdef}
\item[$D^\pi\eC$] Def. \ref{splitcldef}
\item[$D^\pi\F(S,A)$] Def. \ref{scfukcatdef}
\item[$\tDHa(\eD)$] \S\ref{hallsec}
\item[degree] Def. \ref{fukcatdef}
\item[double pair] Def. \ref{balanceddef}
\item[$E_i$] \S\ref{nanparamsec}
\item[edge] \S\ref{graphdef}
\item[$\Phi$] \eqref{parameqn}
\item[$F_i$] \S\ref{nanparamsec}
\item[$\F(S,A)$] Def. \ref{fukcatdef}
\item[$\Alg(D^2,\A_3)$] Cor. \ref{diskprescor}
\item[$\Alg(S,A)$] Def. \ref{fcompalgdef}
\item[foliation data] Def. \ref{foldatadef}
\item[full arc system] \S \ref{surfsec}
\item[fully formal] Def. \ref{fullyformaldef}
\item[$\ga_A$] Thm. \ref{prop:naivemaps}
\item[$\Ga$] \S\ref{graphdef}
\item[$G_{m,n}$] \S\ref{nanparamsec}
\item[$\gr(S)$] \eqref{greqn}
\item[$K_{m,n}$] \S\ref{arcansec}
\item[$M$] \S\ref{surfsec}
\item[map of surfaces] \S\ref{surfsec}
\item[$\NAlg(S,A)$] Def. \ref{naivedef}
\item[$\Omega$] \S\ref{graphdef}
\item[$P_i$] \S\ref{nanparamsec}
\item[$Q_i$] \S\ref{nanparamsec}
\item[$\s$] suspension \S\ref{gradingsec}
\item[$S$] \S\ref{surfsec}
\item[$S$] arc in $\bA_{m,n}$
\item[$T$] arc in $\bA_{m,n}$
\item[$kV_{m,n}$] \eqref{kvmneqn}
\item[$V_{m,n}$] \S\ref{nanparamsec}
\item[vertex] \S\ref{graphdef}
\item[$z_X$] simple module \ref{kalgdef}
\end{list}
\end{multicols}

\subsubsection*{The value of $q$}
If $s := \vnp{\FF_s}$ then $q^2 = s$. This convention agrees with the paper \cite{sillyman1}.

\subsubsection*{$q$-Analogues of the Lie bracket}
If $A$ is a $\ZZ[q]$-algebra and $x,y\in A$ then the $q$-analogue of the Lie bracket $[x,y]_q\in A$ is defined by the equation below.
\[
  [x,y]_q := xy - qyx
\]

This $q$-commutator satisfies a number of elementary algebraic identities, several of which are listed at the end of \cite{sillyman1}.

\bibliography{clipfish}{}
\bibliographystyle{amsalpha}

\end{document}